\newtheorem{theorem}{Theorem}
\newtheorem{lemma}{Lemma}
\newtheorem{problem}{Problem}
\newtheorem{observation}{Observation}
\newtheorem{claim}{Claim}
\newtheorem{corollary}{Corollary}
\tikzstyle{vertex}=[circle, inner sep=1.2pt, minimum size=3pt]
\tikzstyle{filledvertex}=[circle, draw, fill, inner sep=1.2pt, minimum size=3pt]
\newcommand{\vertex}{\node[vertex]}
\tikzstyle{directed}=[postaction={decorate,
\begin{document}
\begin{spacing}{1.2}

\title{Kernels by rainbow paths in arc-colored tournaments \thanks{The first author is supported by NSFC (No. 11601430) and China Postdoctoral Science Foundation (No. 2016M590969);
the second author is supported by  NSFC (No. 11601429);
and the third author is supported by NSFC (Nos. 11571135 and 11671320).}}

\author{\quad Yandong Bai \thanks{Corresponding author. E-mail addresses: bai@nwpu.edu.cn (Y. Bai), binlongli@nwpu.edu.cn (B. Li), sgzhang@nwpu.edu.cn (S. Zhang).},
\quad Binlong Li,
\quad Shenggui Zhang\\[2mm]
\small Department of Applied Mathematics, Northwestern Polytechnical University, \\
\small Xi'an 710129, China}

\date{\today}
\maketitle

\begin{abstract}
For an arc-colored digraph $D$,
define its {\em kernel by rainbow paths} to be a set $S$ of vertices such that
(i) no two vertices of $S$ are connected by a rainbow path in $D$,
and (ii) every vertex outside $S$ can reach $S$ by a rainbow path in $D$.
In this paper,
we show that it is NP-complete to decide whether an arc-colored tournament has a kernel by rainbow paths,
where a {\em tournament} is an orientation of a complete graph.
In addition,
we show that every arc-colored $n$-vertex tournament with
all its strongly connected $k$-vertex subtournaments, $3\leq k\leq n$, colored with at least $k-1$ colors has a kernel by rainbow paths,
and the number of colors required cannot be reduced.

\medskip
\noindent
{\bf Keywords:}
kernel by rainbow (properly colored) paths; arc-colored tournament
\smallskip
\end{abstract}

\section{Introduction}

All graphs (digraphs) considered in this paper are finite and simple,
i.e., without loops or multiple edges (arcs).
For terminology and notation not defined here,
we refer the reader to Bang-Jensen and Gutin \cite{BG2008}.

A path (cycle) in a digraph always means a {\em directed} path (cycle)
and a {\em $k$-cycle $C_{k}$} means a cycle of length $k$, where $k\geq 2$ is an integer.
Let $D$ be a digraph and $m$ a positive integer.
Call $D$ an $m$-colored digraph if its arcs are colored with at most $m$ colors.
For an arc $uv$,
denote by $c(uv)$ the color assigned to $uv$.
An arc-colored digraph is {\em monochromatic} if all arcs receive the same color;
it is {\em properly colored} if any two consecutive arcs receive distinct colors;
and it is {\em rainbow} if any two arcs receive distinct colors.

For a digraph $D$,
define its {\em kernel} to be a set $S$ of vertices of $D$ such that
(i) no two vertices of $S$ are connected by an arc in $D$,
and (ii) every vertex outside $S$ can reach $S$ by an arc in $D$.
This notion was originally introduced by von Neumann and Morgenstern \cite{VM1944} in 1944.
Since it has many applications in both cooperative games and logic, see \cite{Berge1977,Berge1984},
its existence has been the focus of extensive study,
both from the algorithmic perspective and the sufficient condition perspective,
see \cite{BD1990,BG2006,Chvatal1973,CL1974,Duchet1980,GN1984,Richardson1953,VM1944} for more results.
Afterwards, Sands et al. \cite{SSW1982} proposed the notion of kernels by monochromatic paths in 1982.
For an arc-colored digraph $D$,
define its {\em kernel by monochromatic paths} (or {\em MP-kernel} for short) to be a set $S$ of vertices such that
(i) no two vertices of $S$ are connected by a monochromatic path in $D$,
and (ii) every vertex outside $S$ can reach $S$ by a monochromatic path in $D$.
It is worth noting that fruitful results on MP-kernels have been obtained in the past decades,
we refer the reader to \cite{DM1983,Galeana1996,GR2004,GR2004285,SSW1982,Shen1988}.

Based on the results and problems on kernels and MP-kernels,
Bai et al. \cite{BFZ2017} and independently Delgado-Escalante et al. \cite{DGO2018} considered the existence of PCP-kernels in arc-colored digraphs in 2018.
Define a {\em kernel by properly colored paths} (or {\em PCP-kernel} for short) of an arc-colored digraph $D$ to be a set $S$ of vertices such that
(i) no two vertices of $S$ are connected by a properly colored path in $D$,
and (ii) every vertex outside $S$ can reach $S$ by a properly colored path in $D$.
A {\em tournament} is an orientation of a complete graph
and a {\em bipartite tournament} ({\em multipartite tournament}) is an orientation of a complete bipartite (multipartite) graph.
A digraph $D$ is {\em semi-complete} if for every two vertices there exists at least one arc between them,
and $D$ is {\em quasi-transitive} if for every two vertices $u,w$ with $uv,vw\in A(D)$ there exists at least one arc between them.
Bai et al. \cite{BFZ2017} conjectured that
every arc-colored digraph with all cycles properly colored has a PCP-kernel
and verified the conjecture for digraphs with no intersecting cycles, semi-complete digraphs and bipartite tournaments, respectively;
moreover, weaker conditions for the latter two classes of digraphs are given
and it is shown to be NP-hard to decide whether an arc-colored digraph has a PCP-kernel.
In \cite{DGO2018} Delgado-Escalante et al. provided some sufficient conditions for the existence of PCP-kernels in tournaments, quasi-transitive digraphs and multipartite tournaments, respectively.

In the concluding section of \cite{BFZ2017}, Bai et al. introduced the notion of rainbow kernels but did not present detailed analysis.
Here a {\em kernel by rainbow paths} (or {\em rainbow kernel} for short)
of an arc-colored digraph $D$ is defined,
similar to the definition of MP-kernels or PCP-kernels,
to be a set $S$ of vertices such that
(i) no two vertices of $S$ are connected by a rainbow path in $D$,
and (ii) every vertex outside $S$ can reach $S$ by a rainbow path in $D$.
In this paper we concentrate on rainbow kernels in arc-colored tournaments.
Here note that the existence of a rainbow kernel in an arc-colored tournament is equivalent to the existence of one vertex such that
all other vertices can reach this vertex by a rainbow path.
Before we present our main results,
we give some observations on rainbow kernels in sake of a better understanding of this notion.

\begin{observation}\label{observation: rainbow kernel in acyclic digraphs}
Every arc-colored acyclic digraph has a rainbow kernel.
\end{observation}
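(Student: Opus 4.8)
The plan is to argue constructively, exploiting the fact that an acyclic digraph admits a topological ordering $v_1,v_2,\dots,v_n$ of its vertices, meaning every arc runs from a smaller index to a larger one. The structural observation that makes everything work is that in such a digraph \emph{any} path — in particular any rainbow path — joining $v_i$ and $v_j$ must run from the endpoint of smaller index to the one of larger index; hence ``being connected by a rainbow path'' is a one-directional, order-respecting relation, and no cycle-type obstruction can arise.

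First I would build the candidate set $S$ greedily by scanning the vertices in \emph{reverse} topological order: start with $S=\emptyset$ and, for $i=n,n-1,\dots,1$, add $v_i$ to $S$ precisely when there is no rainbow path from $v_i$ to a vertex currently in $S$. Since vertices are only ever added to $S$ and never removed, once a vertex belongs to $S$ it remains there; note also that $v_n$ is inserted at the very first step, so $S$ ends up nonempty.

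Next I would verify the two defining properties. For (i), suppose $v_i,v_j\in S$ with $i<j$ were joined by a rainbow path $P$; by the observation above $P$ runs from $v_i$ to $v_j$. At the moment $v_i$ was examined, $v_j$ — processed earlier, as $j>i$ — already lay in $S$ and stayed there, so the selection rule would have forbidden inserting $v_i$, a contradiction. For (ii), if $v_i\notin S$, then at step $i$ there was some $v_j\in S$ reachable from $v_i$ by a rainbow path; since $v_j$ stays in $S$, vertex $v_i$ reaches $S$ by a rainbow path, as required.

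I do not expect a serious obstacle here; the only point demanding care is the bookkeeping behind property (i): one must use both halves of the ``no vertex is ever removed from $S$'' invariant — that the witness $v_j$ was already present when $v_i$ was rejected, and (for (ii)) that it is still present at the end — while acyclicity enters solely to furnish the topological order and thereby the one-directionality of rainbow connectivity. Equivalently, one could phrase the argument as an induction on $|V(D)|$ by deleting a source $s$: a rainbow kernel $S'$ of $D-s$ is already a rainbow kernel of $D$ when $s$ reaches $S'$ by a rainbow path, and otherwise $S'\cup\{s\}$ works, since a source can never be the terminus of a path.
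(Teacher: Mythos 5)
Your argument is correct, and in spirit it is the same greedy, ``grow $S$ from the sinks backwards'' construction that the paper sketches; but your one-vertex-at-a-time refinement is not merely stylistic --- it actually closes a gap in the paper's version. The paper inserts \emph{all} sinks of the current remaining subdigraph into $S$ as a batch and only afterwards deletes the vertices that can reach $S$ by a rainbow path; read literally, this can fail, because two sinks of the remaining subdigraph may be joined by a rainbow path of $D$ that travels through already-deleted vertices. For instance, let $V(D)=\{u,y,w,p,s\}$ with arcs $uy,ys,yw,wp,ps$ colored $1,1,3,2,2$ respectively: the first round puts the unique sink $s$ into $S$ and deletes $y$ and $p$ (each reaches $s$ by a single arc), after which $u$ and $w$ are both sinks of what remains and are both inserted --- yet $u\rightarrow y\rightarrow w$ is a rainbow path in $D$, violating condition (i). Your reverse-topological scan avoids this because each candidate is tested against the \emph{current} $S$ before insertion: any rainbow path between two chosen vertices must run from the smaller index to the larger, and the larger-indexed endpoint was already in $S$ when the smaller one was examined, so the smaller one would have been rejected; condition (ii) holds since every rejected vertex witnesses a rainbow path to a vertex that is never subsequently removed. (On the example above your procedure returns $\{s,w\}$, which is indeed a rainbow kernel.) Your closing inductive reformulation via deletion of a source is also sound, since a source has in-degree zero and hence can be neither an internal vertex nor the terminus of any path.
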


This is due to the fact that every subdigraph of an acyclic digraph has at least one sink
and we can obtain a rainbow kernel $S$ by the following progress.
First put the sinks of the digraph into $S$ and delete all the vertices that can reach $S$ by a rainbow path,
then put the sinks of the remaining subdigraph into $S$,
continue the above process till there is no vertex left.
The resulting set $S$ is a required rainbow kernel.

Note that the notion of rainbow kernels in arc-colored digraphs generalizes, in some sense, the concept of kernels in digraphs.
On one hand,
a digraph $D$ has a kernel if and only if a monochromatic $D$ has a rainbow kernel;
on the other hand,
an arc-colored digraph $D$ has a rainbow kernel if and only if its rainbow closure $\mathscr{C}_{_{R}}(D)$ has a kernel,
where the {\em rainbow closure} $\mathscr{C}_{_{R}}(D)$ of $D$ is defined
to be a digraph with vertex set $V(\mathscr{C}_{_{R}}(D))=V(D)$
and arc set
$$
A(\mathscr{C}_{_{R}}(D))=\{uv:~there~is~a~rainbow~(u,v)\textrm{-}path~in~D\}.
$$

In the rest of this paper,
we first show the NP-completeness of the rainbow kernel problem in arc-colored tournaments
and then provide a sufficient condition for the existence of a rainbow kernel in this digraph class.

\section{Main results}

Chv\'{a}tal \cite{Chvatal1973} showed in 1973 that it is NP-complete to decide whether a digraph has a kernel.
Bai et al. \cite{BFZ2017} pointed out in 2018 that it is NP-hard to recognize whether an arc-colored digraph has a rainbow kernel.
In this paper we study the complexity of the rainbow kernel problem in arc-colored tournaments.
In general, things look better in dense instances.
For example,
we shall show that the PCP-kernel problem in tournaments is polynomially solvable,
but in general digraphs it has been proved to be NP-hard in \cite{BFZ2017}.
However, for the rainbow kernel problem,
it will be shown to be NP-complete even when we restrict the digraph class to arc-colored tournaments.

\begin{theorem}\label{thm: RKT is NPC}
It is NP-complete to decide whether an arc-colored tournament has a rainbow kernel.
\end{theorem}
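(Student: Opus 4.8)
The plan has two parts: proving membership in NP, and proving NP-hardness. For membership, recall the remark made just above: in a tournament any two vertices are joined by an arc, which is a rainbow path of length one, so condition~(i) forces a rainbow kernel to consist of at most one vertex; and since no vertex reaches the empty set, a nonempty tournament $T$ has a rainbow kernel if and only if there is a vertex $v$ reached by a rainbow path from every other vertex. A certificate is then such a vertex $v$ together with, for each $u\ne v$, a rainbow $(u,v)$-path; it has polynomial size and is checked in polynomial time by verifying that each listed sequence is a path from $u$ to $v$ whose arcs receive pairwise distinct colors. Hence the problem lies in NP, and it remains to prove NP-hardness --- the difficulty one expects here being already present in the two-terminal question of whether a rainbow path joins two prescribed vertices of an arc-colored digraph.

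For hardness I would reduce from 3-SAT. Note first that the general-digraph hardness result of Bai et al.\ \cite{BFZ2017} does not transfer directly: in a tournament the independence condition~(i) is vacuous for singletons, so a rainbow kernel is nothing but a single vertex reached by a rainbow path from every other vertex, and this must be engineered from scratch. Given a 3-CNF formula $\phi$ over variables $x_1,\dots,x_n$ with clauses $C_1,\dots,C_m$, I would build an arc-colored tournament $T_\phi$ with a distinguished vertex $z$, a ``tester'' vertex $q$, and a small gadget (block) for each variable and each clause, laid out in consecutive stages directed toward $z$ --- say $\{q\}$, then $B_1,\dots,B_n$, then $D_1,\dots,D_m$, then $\{z\}$ --- so that
\[
T_\phi \text{ has a rainbow kernel} \iff z \text{ is reached by a rainbow path from every vertex} \iff \phi \text{ is satisfiable.}
\]
Each variable block $B_i$ would contain a directed cycle and offer exactly two internally disjoint rainbow routes from its entrance to its exit, a \emph{true} route and a \emph{false} route; each clause block $D_j$ would admit a rainbow route exactly when one of the literals of $C_j$ is ``true''. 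The heart of the construction is the coloring: a carefully chosen pool of colors is \emph{recycled} between the two routes of $B_i$ and the literal-routes of $x_i$ in the clause blocks, so that along any single rainbow path the routes chosen in $B_1,\dots,B_n$ commit to a truth assignment and block $D_j$ is then passable precisely when that assignment satisfies $C_j$. The construction is designed so that every rainbow $(q,z)$-route must run through all of $B_1,\dots,B_n,D_1,\dots,D_m$ in turn; hence $q$ reaches $z$ by a rainbow path if and only if some assignment satisfies every clause, i.e.\ if and only if $\phi$ is satisfiable. The cycles inside the variable blocks are essential, since by Observation~\ref{observation: rainbow kernel in acyclic digraphs} an acyclic instance would carry a rainbow kernel automatically; cycles are what make the rainbow restriction ``bite''.

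It remains to complete $T_\phi$ to a tournament while keeping $z$ the \emph{only} possible rainbow kernel. The intended mechanism is to orient the arcs across non-consecutive stages, and those inside a stage, so that no rainbow path crosses a stage boundary backward and no dense ``filler'' arc creates a shortcut. Granting this, every vertex other than $z$ fails to be a rainbow kernel (a vertex in an earlier stage is not reached from any vertex of a later stage, and $q$ is not reached from the others); and when $\phi$ is satisfiable, $q$ reaches $z$ along the designed route while every other vertex reaches $z$ along a forward rainbow path, so $z$ is a rainbow kernel, whereas when $\phi$ is unsatisfiable, $q$ does not reach $z$, so $z$ is not a rainbow kernel, and then neither is any other vertex. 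This gives the displayed equivalence, and $T_\phi$ is plainly constructed in polynomial time.

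The step I expect to be the main obstacle is proving correctness, on two fronts. First, the reduction must convert the \emph{global} content of 3-SAT --- one consistent assignment satisfying all clauses --- into purely \emph{local} ``no repeated color'' constraints along rainbow paths; designing the recycled color pool so that it enforces exactly ``literal $\ell$ true $\iff$ $\ell$'s route rainbow-traversable'', while each variable route stays rainbow on its own and the $(q,z)$-route is rainbow precisely for satisfying assignments, is delicate. Second, and more seriously, completing the gadget to a tournament inserts an arc between \emph{every} pair of vertices, each a potential rainbow shortcut --- one that could let $q$ bypass a block, create a backward rainbow path across a stage boundary, or spoil the claim that every non-$z$ vertex has some vertex failing to reach it. Choosing the orientations and colors of these filler arcs so that no rainbow path can exploit them, and then verifying this case by case, is where the bulk of the work will lie.
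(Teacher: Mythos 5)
Your NP-membership argument is correct and matches the paper's (a rainbow kernel in a tournament is a single vertex, and a family of rainbow paths into it is a polynomial certificate). The hardness half, however, is a plan rather than a proof, and the gap is exactly where you say you expect it to be: you never exhibit the variable gadgets, the clause gadgets, the ``recycled'' color pools, or the orientation and coloring of the filler arcs, and the correctness argument is explicitly deferred (``Granting this, \dots''). Since the entire content of the theorem is the construction and its verification, what you have written does not yet establish NP-hardness. In particular, the two difficulties you flag are real and nontrivial: encoding a globally consistent assignment through purely local ``no repeated color'' constraints is delicate (a rainbow path forbids a color from recurring \emph{anywhere} along the path, not just in the block where it was introduced, so naive color recycling between a variable block and its clause occurrences tends to forbid too much or too little), and completing to a tournament adds $\Theta(N^2)$ arcs each of which must be argued harmless.

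For comparison, the paper sidesteps both issues. It first proves NP-hardness of the two-terminal problem you identify as the crux (existence of a rainbow $(x,y)$-path in an arc-colored \emph{oriented graph}), but by reduction from 3-dimensional perfect matching rather than 3-SAT: between consecutive hub vertices $x_{i}$ and $x_{i+1}$ it places one length-3 path per hyperedge, colored by the three vertices of that hyperedge, so that a rainbow $(x_0,x_n)$-path selects $n$ pairwise disjoint hyperedges covering all $3n$ vertices --- disjointness \emph{is} rainbowness, with no assignment-consistency gadget needed. It then completes that oriented graph to a tournament by adding only four vertices $x',x'',y',y''$ and coloring every filler arc (including all arcs added between non-adjacent vertices of $D$) with a single fresh color $\alpha$ that is also forced onto every candidate rainbow path through the apparatus; hence no filler arc can serve as a shortcut, and the case analysis collapses to checking a handful of short paths among the four new vertices. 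If you want to salvage your approach, you would either need to carry out the 3-SAT gadgetry in full, or adopt something like the paper's two-stage route, where the tournament-completion problem is solved once, generically, by the ``one fresh color on all filler arcs'' trick.
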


Call a digraph {\em strongly connected} if there exists an $(x,y)$-path for every two distinct vertices $x$ and $y$ of the digraph.
If a rainbow tournament is strongly connected,
then each vertex forms a rainbow kernel;
if it is not strongly connected,
then each vertex in its `sink component' forms a rainbow kernel.
It therefore follows that every rainbow tournament has a rainbow kernel.

Note that an arc-colored tournament $T$ is rainbow if and only if
every $k$-vertex subtournament, $2\leq k\leq |V(T)|$, of $T$ is colored with $\binom{k}{2}=\frac{k^{2}-k}{2}$ colors.
A natural question is what will happen if we reduce the number of colors.
We notice that if every triangle is rainbow then the tournament has a rainbow kernel (see the sketch of the proof in Section \ref{section: concluding}).
As an immediate consequence,
we get that if every strongly connected $k$-vertex subtournament is colored with at least $k$ colors,
$3\leq k\leq |V(T)|$,
then the tournament $T$ has a rainbow kernel.
How about that we continue to reduce the number of colors?
For example, we only require that every $k$-vertex subtournament is colored with at least $k-1$ colors.
In this paper we show that it still suffices for the existence of a rainbow kernel.

\begin{theorem}\label{thm: rainbow kernels in tournaments}
Let $T$ be an arc-colored tournament.
If every strongly connected $k$-vertex subtournament of $T$ is colored with at least $k-1$ colors,
$3\leq k\leq |V(T)|$,
then $T$ has a rainbow kernel.
\end{theorem}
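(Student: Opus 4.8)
The plan is to argue by induction on $n = |V(T)|$, exhibiting a single vertex $v$ such that every other vertex reaches $v$ by a rainbow path; as noted in the excerpt, such a $v$ is itself a rainbow kernel. The base cases $n \leq 2$ are trivial, and for $n = 3$ a strongly connected triangle is colored with at least $2$ colors, which one checks directly yields a vertex reachable by a rainbow path (here a rainbow path may be a single arc or a $2$-arc path with two distinct colors) from the other two. For the inductive step, I would first dispose of the non-strong case: if $T$ is not strongly connected, let $T'$ be the terminal strong component in the condensation; every vertex outside $T'$ reaches $T'$ by a (directed, hence after shortening) path, and it suffices to find the desired vertex inside $T'$, which has fewer vertices and inherits the color condition, so induction applies — but one must be careful that a rainbow kernel vertex of $T'$ is reached from outside $T'$ by a \emph{rainbow} path, not merely a directed one. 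This is the first place a short argument is needed: a directed path from $u \notin T'$ into $T'$ can be taken to first reach $T'$ and then follow a rainbow path inside $T'$, but the concatenation need not be rainbow. I would instead observe that since $T$ is a tournament, if $u$ reaches $T'$ at all then $u$ dominates some vertex of $T'$ or is dominated, and exploit the tournament structure to route a short rainbow path; alternatively, pick the reached vertex of $T'$ to be the induction-supplied kernel vertex directly via an arc.

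So the heart of the matter is the strongly connected case. Here I would apply the hypothesis with $k = n$: the whole tournament $T$ is colored with at least $n-1$ colors. Consider a spanning structure witnessing many colors. The key idea I would pursue: delete one vertex $x$ cleverly so that $T - x$ still satisfies the hypothesis, obtain by induction a vertex $v \in T - x$ reachable from all of $V(T) \setminus \{x, v\}$ by rainbow paths in $T - x$, and then argue that $x$ also reaches $v$ by a rainbow path in $T$. The obstacle is that $T - x$ may fail to be strongly connected even though $T$ is, and more subtly, deleting $x$ could drop the total color count below $(n-1)-1$ on some strong subtournament. To handle the color bookkeeping I would choose $x$ to lie on a rainbow Hamiltonian-type path or to be an endpoint of a longest rainbow path; in a strong tournament colored with $\geq n-1$ colors one expects a rainbow path using roughly $n-1$ colors and hence close to $n-1$ arcs, and such a path gives a linear order in which to peel off a vertex.

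An alternative, and I suspect cleaner, route: take a \emph{shortest} rainbow path $P = v_0 v_1 \cdots v_\ell$ that is "maximal" in the reachability sense, or dually consider the vertex $v$ minimizing the length of the longest rainbow path ending at it, and use the color condition on the strong subtournament induced by $P \cup \{\text{a vertex forcing a contradiction}\}$. Concretely: suppose for contradiction no vertex is reachable by rainbow paths from all others. Then the rainbow closure $\mathscr{C}_{_R}(T)$ has no kernel... — but this only reduces to the classical kernel problem, which is hard in general. So instead I would directly extract a sink-free configuration and show it forces a strongly connected subtournament $T''$ on some $k$ vertices that is colored with only $k-2$ colors, contradicting the hypothesis. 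Finding that specific obstruction subtournament is the main difficulty: it requires tracking which colors are "spent" along competing rainbow paths between a cyclically arranged set of vertices, and showing the cyclic arrangement (which exists because reachability fails to have a sink) forces two arcs of a short cycle to repeat colors with arcs already committed elsewhere. The tightness claim — that $k-1$ cannot be lowered to $k-2$ — I would establish separately by constructing, for each $n$, an arc-colored $n$-vertex strong tournament in which every strong $k$-vertex subtournament uses exactly $k-2$ colors and no vertex is reachable by rainbow paths from all others (a natural candidate is a tournament built from a "nearly monochromatic" cyclic blow-up), which complements the theorem.
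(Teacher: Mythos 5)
Your proposal is not yet a proof; it is a list of candidate strategies, and you explicitly concede the decisive step (``finding that specific obstruction subtournament is the main difficulty''). Two of your stated worries are in fact non-issues. First, the hypothesis is hereditary: every strongly connected subtournament of $T-x$ is a strongly connected subtournament of $T$, so the induction hypothesis applies to $T-x$ for \emph{any} $x$, and no clever choice of $x$ is needed to preserve the color condition. Second, the non-strong case is immediate, because in a tournament every vertex outside the terminal strong component dominates every vertex inside it, so a good vertex of that component (one reachable from all others by a rainbow path) is reached from outside by a single arc. But settling these does not close the real gap: given that for every $v$ the subtournament $T-v$ has a good vertex $v^*$, why must some vertex be good in $T$ itself?

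The paper's answer, absent from your sketch, is structural. One shows $v^*\to v$ (else $v^*$ is already good in $T$) and $u^*\neq v^*$ for $u\neq v$, so the arcs $\{v^*v : v\in V(T)\}$ form a functional digraph decomposing into disjoint cycles; if there were two or more cycles, induction applied to the subtournament on one of them would yield a good vertex of $T$, so these arcs form a Hamilton cycle $(v_0,\dots,v_{n-1},v_0)$ along which no rainbow $(v_i,v_{i-1})$-path exists. The triangle condition then forces $v_i\to v_{i+2}$ for all $i$, and choosing $t$ maximal with $v_i\to\{v_{i+1},\dots,v_{i+t-1}\}$ for all $i$ isolates a subtournament on $\{v_0,\dots,v_t\}$ that is transitive except for the single back-arc $v_tv_0$. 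The paper's Lemma~\ref{lemma: T*} --- itself a delicate minimum-counterexample argument involving several rounds of recoloring and vertex deletion --- shows that any such subtournament satisfying the color hypothesis must contain a rainbow $(v_i,v_{i-1})$-path, the desired contradiction. This lemma is the technical heart of the theorem, and nothing in your proposal substitutes for it; the phrases about longest rainbow paths, cyclic blow-ups, and ``tracking which colors are spent'' gesture at the difficulty without supplying an argument. (The tightness construction you propose to add is also not part of the statement to be proved.)
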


The number $k-1$ in Theorem \ref{thm: rainbow kernels in tournaments} cannot be reduced in view of the arc-colored tournament $T_{n}^{*}$ defined as follows.
Let $V(T_{n}^{*})=\{v_{0},v_{1},\ldots,v_{n-1}\}$
and let $v_{i}\rightarrow v_{j}$ for any $0\leq i<j\leq n-1$, except that $v_{2}\rightarrow v_{0}$.
Color the arcs $v_{0}v_{1},v_{1}v_{2},v_{2}v_{0}$ with the same color and for other arcs color them arbitrarily.
It is not difficult to see that every strongly connected $k$-vertex subtournament, $3\leq k\leq n$, of $T_{n}^{*}$ is colored with $k-2$ colors but $T_{n}^{*}$ has no rainbow kernel.

Note that the monochromatic triangle is an `unfriendly' structure for the existence of a rainbow kernel in $T_{n}^{*}$.
One may ask whether `no monochromatic triangles' suffices in general.
The answer is negative as can be seen from the tournament $T_{5}^{*}$ in Figure \ref{figure: a 2-colored T5},
in which solid arcs and dotted arcs represent arcs colored with two distinct colors.
But it seems that if we forbid monochromatic triangles
then the number of colors required in Theorem \ref{thm: rainbow kernels in tournaments} may be reduced.
Let $f(k)$ be the minimum integer such that
every arc-colored tournament with no monochromatic triangles and
with all its strongly connected $k$-vertex subtournaments, $4\leq k\leq |V(T)|$, colored with at least $f(k)$ colors has a rainbow kernel.
We conjecture that $f(k)\leq k-2$.

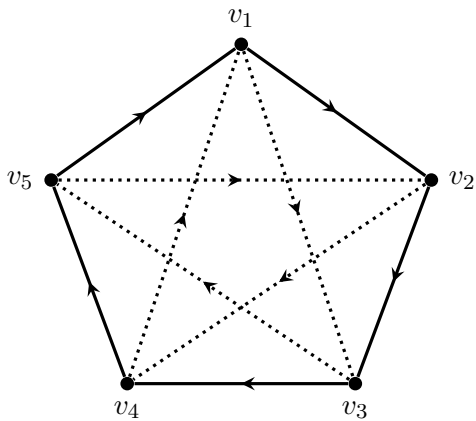
\begin{figure}[ht]
\begin{center}
\begin{tikzpicture}
\tikzstyle{vertex}=[circle,inner sep=1.8pt, minimum size=0.1pt]


\vertex (a)[fill] at (0,2)[label=above:$v_{1}$]{};
\vertex (b)[fill] at (2.5,0.2)[label=right:$v_{2}$]{};
\vertex (c)[fill] at (1.5,-2.5)[label=below:$v_{3}$]{};
\vertex (d)[fill] at (-1.5,-2.5)[label=below:$v_{4}$]{};
\vertex (e)[fill] at (-2.5,0.2)[label=left:$v_{5}$]{};


\draw [directed,line width=1.2pt] (a)--(b);
\draw [directed,line width=1.2pt] (b)--(c);
\draw [directed,line width=1.2pt] (c)--(d);
\draw [directed,line width=1.2pt] (d)--(e);
\draw [directed,line width=1.2pt] (e)--(a);
\draw [directed,dotted,line width=1.2pt] (a)--(c);
\draw [directed,dotted,line width=1.2pt] (c)--(e);
\draw [directed,dotted,line width=1.2pt] (e)--(b);
\draw [directed,dotted,line width=1.2pt] (b)--(d);
\draw [directed,dotted,line width=1.2pt] (d)--(a);

\end{tikzpicture}
\end{center}
\caption{A 2-colored tournament $T_{5}^{*}$ with no rainbow kernel.}
\label{figure: a 2-colored T5}
\end{figure}

In \cite{BFZ2017} Bai et al. provided a sufficient condition for the existence of a PCP-kernel in arc-colored tournaments.
Another main contribution of this paper concentrates on the complexity of the above problem.

\begin{theorem}\label{thm: PKT is polynomial}
It is polynomially solvable for finding a PCP-kernel in an arc-colored tournament or deciding that no PCP-kernel exists.
\end{theorem}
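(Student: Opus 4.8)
The plan is to reduce the problem to reachability by properly colored paths and then to exploit the density of tournaments to make that reachability decidable in polynomial time. First note that a PCP-kernel of a tournament $T$ on at least two vertices is necessarily a single vertex: any two vertices of $T$ are joined by an arc, and a single arc is (vacuously) a properly colored path, so condition (i) rules out $|S|\geq 2$. Hence $T$ has a PCP-kernel if and only if some vertex $w$ is reached by a properly colored path from every other vertex; equivalently, letting the \emph{PCP-closure} $\mathscr{C}_{_{P}}(T)$ be the digraph on $V(T)$ with arc set $\{uv:\text{there is a properly colored }(u,v)\text{-path in }T\}$, $T$ has a PCP-kernel if and only if the semi-complete digraph $\mathscr{C}_{_{P}}(T)$ has a kernel, which for a semi-complete digraph just amounts to having a vertex into which every other vertex has an arc. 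This last check is immediate once $\mathscr{C}_{_{P}}(T)$ is known, so everything comes down to: given an ordered pair $(u,v)$, decide in polynomial time whether $T$ has a properly colored $(u,v)$-path.

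Walk reachability is the easy half. Form the auxiliary digraph $H$ whose vertices are the pairs $(x,i)$ such that some arc of $T$ entering $x$ is colored $i$, together with a source $s_u$, and put an arc $(x,i)\to(y,j)$ whenever $xy\in A(T)$, $c(xy)=j$ and $j\neq i$, together with arcs $s_u\to(y,c(uy))$ for all $uy\in A(T)$; then properly colored walks of $T$ starting at $u$ correspond to directed walks of $H$ starting at $s_u$, and $H$ has $O(|V(T)|\,m)$ vertices. So for each target $w$ we can compute in polynomial time the set $Z(w)$ of vertices that reach $w$ by a properly colored \emph{walk}. Since a properly colored path is in particular such a walk, any PCP-kernel vertex $w$ satisfies $Z(w)=V(T)$, so if no vertex $w$ has $Z(w)=V(T)$ we may already declare that no PCP-kernel exists.

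The difficulty — and the step I expect to be the main obstacle — is that properly colored walks do not in general shorten to properly colored paths: a walk can be forced to re-enter a vertex $x$ through an off-color arc only in order to change the color with which it afterwards leaves $x$, so $Z(w)=V(T)$ is not sufficient. The intended remedy is a structural analysis of a shortest properly colored $(u,w)$-walk $W$. If $W$ is not already a path it revisits some vertex $x$, and minimality pins down a rigid picture: the part of $W$ between two consecutive visits to $x$ is a directed cycle of length at least $3$, and its first and last arcs at $x$ have color different from the color with which $W$ enters, and then leaves, $x$. Using that $T$ is a tournament — so there is an arc between every pair of the vertices involved — one would show that either some such arc can be used to splice $W$ into a strictly shorter properly colored walk, contradicting minimality, or else $x$ is a genuine cut vertex separating $u$ from $w$ whose incident colors force a conflict, in which case one recurses after deleting $x$ and imposing the appropriate color constraints at the two severed ends. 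Carried out carefully this should yield a polynomial-time algorithm that computes $\mathscr{C}_{_{P}}(T)$, hence decides whether $T$ has a PCP-kernel and outputs one if it does; the hard part is establishing the rerouting-or-cut-vertex dichotomy and controlling the recursion so that the overall running time remains polynomial.
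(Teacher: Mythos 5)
Your two preliminary reductions --- that a PCP-kernel of a tournament must be a single vertex, and that the problem therefore reduces to computing the PC closure --- are exactly the paper's first steps, and your auxiliary digraph $H$ on color-annotated vertices $(x,i)$ is the same device as the paper's $D_{_{T}}$. The genuine gap is the one you yourself flag: the passage from properly colored \emph{walks} to properly colored \emph{paths}, which your proposal only sketches. This is not a technicality that careful splicing will dispose of. Consider the tournament on $\{u,x,a,b,w\}$ with arcs $u\rightarrow x$, $x\rightarrow a$, $a\rightarrow b$, $b\rightarrow x$, $x\rightarrow w$ colored $1,2,3,4,1$ respectively, and with $a\rightarrow u$, $b\rightarrow u$, $w\rightarrow u$, $w\rightarrow a$, $w\rightarrow b$ colored arbitrarily. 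Then $u\rightarrow x\rightarrow a\rightarrow b\rightarrow x\rightarrow w$ is a properly colored $(u,w)$-walk; but the unique arc leaving $u$ is $u\rightarrow x$ and the unique arc entering $w$ is $x\rightarrow w$, so the only $(u,w)$-path is $u\rightarrow x\rightarrow w$, which is not properly colored. Hence walk-reachability strictly overestimates path-reachability even in arc-colored tournaments, $Z(w)=V(T)$ is only a necessary condition, and your ``rerouting-or-cut-vertex dichotomy'' is precisely the statement that still has to be proved: you give no argument that minimality of the walk forces either a splice or a cut vertex, no precise formulation of the color constraints after deletion, and no bound on the recursion. The proof is therefore incomplete at its central step.

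For what it is worth, the paper's own argument does not resolve this issue either: the recurrence defining $D_{k}$ extends a pair $((v',c'),(v,c))$ by an arc $vv''$ without checking that $v''$ avoids the earlier vertices, so the invariant asserted for $D_{i}$ holds for walks rather than paths, and the example above shows the two notions genuinely differ. Your instinct that this is the main obstacle is correct; a complete proof would require either a true polynomial-time algorithm for properly colored $(s,t)$-paths in arc-colored tournaments, or an argument that for the purpose of detecting a kernel one may substitute walk-reachability for path-reachability --- neither of which is supplied in your sketch.
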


We present the proofs of Theorems \ref{thm: RKT is NPC}, \ref{thm: rainbow kernels in tournaments} and \ref{thm: PKT is polynomial}
in Sections \ref{section: proof of thm 1}, \ref{section: proof of thm 2} and \ref{section: proof of thm 3}, respectively.
Here we give some necessary definitions and notations.
For an arc $uv$ of a digraph $D$,
we sometimes write $u\rightarrow v$ and say $u$ {\em dominates} $v$.
For two vertex-disjoint subsets $M$ and $N$ of $V(D)$,
we write $M\rightarrow N$ if each vertex of $M$ dominates all vertex of $N$.
For two subsets $R$ and $S$ of $V(D)$,
we use $N^{+}_{S}(R)$ (resp. $N^{-}_{S}(R)$) to denote the set of outneighbors (resp. inneighbors) of the vertices of $R$ in $S$.
For convenience, we write $v\rightarrow M$ for $\{v\}\rightarrow M$,
$M\rightarrow v$ for $M\rightarrow \{v\}$,
$N^{+}_{S}(v)$ for $N^{+}_{S}(\{v\})$
and $N^{-}_{S}(v)$ for $N^{-}_{S}(\{v\})$.

\section{Proof of Theorem \ref{thm: RKT is NPC}}
\label{section: proof of thm 1}

We first offer some related definitions.
Define a {\em hypergraph} $H$ to be an ordered pair $H=(V,E)$,
where $V$ is a finite set (the {\em vertex set})
and $E$ is a family of distinct subsets of $V$ (the {\em edge set}).
A hypergraph is {\em $k$-uniform} if all edges have size $k$,
where $k\geq 2$ is an integer.
A {\em matching} of a hypergraph is a set of disjoint edges, i.e., no two of them have a common vertex.
Call a matching of a hypergraph {\em perfect} if it contains all the vertices.
Note that a perfect matching of a $k$-uniform hypergraph has size $|V(H)|/k$
and a necessary condition for its existence is that $|V(H)|=0$ (mod $k$).\\

\noindent
\textbf{$k$-Dimensional Perfect Matching ($k$DPM)}\\
\textbf{Input:} A $k$-uniform hypergraph $H$ on $n$ vertices.\\
\textbf{Question:} Does $H$ have a $k$-dimensional perfect matching?\\

The $k$-dimensional perfect matching problem is known to be NP-complete for every $k\geq 3$, see \cite{Karp1972,Papadimitriou1994}.
Here we only use the result that $3DPM\in NPC$.
Define an {\em oriented graph} to be a simple digraph without 2-cycles.
The following decision problem will play an important role in our proof.\\

\noindent
\textbf{Rainbow Paths in Oriented Graphs (RPOG)}\\
\textbf{Input:} An arc-colored oriented graph $D$ with two distinct vertices $x$ and $y$.\\
\textbf{Question:} Does $D$ have a rainbow path from $x$ to $y$?\\

For convenience,
we state the rainbow kernel problem as follows.\\

\noindent
\textbf{Rainbow Kernels in Tournaments (RKT)}\\
\textbf{Input:} An arc-colored tournament $T$.\\
\textbf{Question:} Does $T$ have a rainbow kernel?\\

It is not difficult to see that $RKT\in NP$.
The theorem will be shown by a reduction from the RPOG problem,
whose NP-hardness will be obtained by a reduction from the 3DPM problem, which is known to be NP-complete.

We first show the NP-hardness of the RPOG problem.
For any instance $H$ of the 3DPM problem,
we will construct an arc-colored oriented graph $D_{_H}$ with two vertices $x$ and $y$, an instance of the RPOG problem,
in such a way that $H$ has a 3-dimensional perfect matching if and only if
$D_{_{H}}$ has a rainbow path from $x$ to $y$.

Let $H$ be an arbitrary $3n$-vertex 3-uniform hypergraph
and denote its vertex set and edge set by $\{1,\ldots,3n\}$ and $\{h_{1},\ldots,h_{m}\}$, respectively.
We construct an arc-colored oriented graph $D_{_H}$ with $\{x_{0},x_{1},\ldots,x_{n}\}\subseteq V(D_{_H})$
and add $m$ internally-vertex-disjoint paths $P^{1}_{i+1},\ldots,P^{m}_{i+1}$, each of length 3, from $x_{i}$ to $x_{i+1}$,
for each $0\leq i\leq n-1$, see Figure \ref{figure: oriented graph}.
Note that $D_{_H}$ has $2nm+n+1$ vertices and $3nm$ arcs.
We color the arcs of the path $P_{i}^{j}$ with vertices contained in the edge $h_{j}$ for each $1\leq i\leq n$ and $1\leq j\leq m$.
Setting $x=x_{0}$ and $y=x_{n}$.
We now show that $H$ has a 3-dimensional perfect matching if and only if
$D_{_{H}}$ has a rainbow path from $x$ to $y$.
If $D_{_{H}}$ has a rainbow $(x,y)$-path,
say $P=\cup_{i=1}^{n}P_{i}^{j_{i}}$,
then for any $i'\neq i''$ we have $h_{j_{i'}}\cap h_{j_{i''}}=\emptyset$,
since otherwise two arcs with the same color will appear.
Clearly, the edges (of $H$) corresponding to the paths $P_{i}^{j_{i}}$, $1\leq i\leq n$, are disjoint and they cover all vertices of $H$.
The reverse can be obtained similarly.

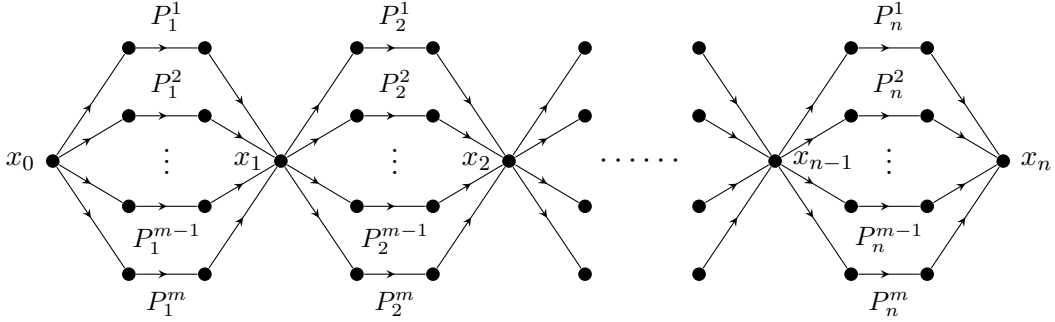
\begin{figure}[ht]
\begin{center}
\begin{tikzpicture}
\tikzstyle{vertex}=[circle,inner sep=1.8pt, minimum size=0.1pt]


\vertex (x2) [fill] at (0,0)[label=left:$x_{2}$]{};
\vertex (x1) [fill] at (-3,0)[label=left:$x_{1}$]{};
\vertex (x0) [fill] at (-6,0)[label=left:$x_{0}$]{};
\vertex (x3) [fill] at (3.5,0)[label=right:$x_{n-1}$]{};
\vertex (x4) [fill] at (6.5,0)[label=right:$x_{n}$]{};


\vertex (y11) [fill] at (-2,1.5)[]{};
\vertex (y12) [fill] at (-2,0.6)[]{};
\vertex (y13) [fill] at (-2,-0.6)[]{};
\vertex (y14) [fill] at (-2,-1.5)[]{};

\vertex (y15) [fill] at (-1,1.5)[]{};
\vertex (y16) [fill] at (-1,0.6)[]{};
\vertex (y17) [fill] at (-1,-0.6)[]{};
\vertex (y18) [fill] at (-1,-1.5)[]{};

\path (x1) edge[directed] (y11);
\path (x1) edge[directed] (y12);
\path (x1) edge[directed] (y13);
\path (x1) edge[directed] (y14);

\path (y11) edge[directed] (y15);
\path (y12) edge[directed] (y16);
\path (y13) edge[directed] (y17);
\path (y14) edge[directed] (y18);

\path (y15) edge[directed] (x2);
\path (y16) edge[directed] (x2);
\path (y17) edge[directed] (x2);
\path (y18) edge[directed] (x2);


\vertex (y01) [fill] at (-5,1.5)[]{};
\vertex (y02) [fill] at (-5,0.6)[]{};
\vertex (y03) [fill] at (-5,-0.6)[]{};
\vertex (y04) [fill] at (-5,-1.5)[]{};

\vertex (y05) [fill] at (-4,1.5)[]{};
\vertex (y06) [fill] at (-4,0.6)[]{};
\vertex (y07) [fill] at (-4,-0.6)[]{};
\vertex (y08) [fill] at (-4,-1.5)[]{};

\path (x0) edge[directed] (y01);
\path (x0) edge[directed] (y02);
\path (x0) edge[directed] (y03);
\path (x0) edge[directed] (y04);

\path (y01) edge[directed] (y05);
\path (y02) edge[directed] (y06);
\path (y03) edge[directed] (y07);
\path (y04) edge[directed] (y08);

\path (y05) edge[directed] (x1);
\path (y06) edge[directed] (x1);
\path (y07) edge[directed] (x1);
\path (y08) edge[directed] (x1);


\vertex (y31) [fill] at (4.5,1.5)[]{};
\vertex (y32) [fill] at (4.5,0.6)[]{};
\vertex (y33) [fill] at (4.5,-0.6)[]{};
\vertex (y34) [fill] at (4.5,-1.5)[]{};

\vertex (y35) [fill] at (5.5,1.5)[]{};
\vertex (y36) [fill] at (5.5,0.6)[]{};
\vertex (y37) [fill] at (5.5,-0.6)[]{};
\vertex (y38) [fill] at (5.5,-1.5)[]{};

\path (x3) edge[directed] (y31);
\path (x3) edge[directed] (y32);
\path (x3) edge[directed] (y33);
\path (x3) edge[directed] (y34);

\path (y31) edge[directed] (y35);
\path (y32) edge[directed] (y36);
\path (y33) edge[directed] (y37);
\path (y34) edge[directed] (y38);

\path (y35) edge[directed] (x4);
\path (y36) edge[directed] (x4);
\path (y37) edge[directed] (x4);
\path (y38) edge[directed] (x4);


\vertex (y21) [fill] at (1,1.5)[]{};
\vertex (y22) [fill] at (1,0.6)[]{};
\vertex (y23) [fill] at (1,-0.6)[]{};
\vertex (y24) [fill] at (1,-1.5)[]{};

\vertex (y25) [fill] at (2.5,1.5)[]{};
\vertex (y26) [fill] at (2.5,0.6)[]{};
\vertex (y27) [fill] at (2.5,-0.6)[]{};
\vertex (y28) [fill] at (2.5,-1.5)[]{};

\path (x2) edge[directed] (y21);
\path (x2) edge[directed] (y22);
\path (x2) edge[directed] (y23);
\path (x2) edge[directed] (y24);

\path (y25) edge[directed] (x3);
\path (y26) edge[directed] (x3);
\path (y27) edge[directed] (x3);
\path (y28) edge[directed] (x3);


\vertex at (1.75,0)[label=center:{\large $\cdots \cdots$}]{};
\vertex at (-1.5,0.1)[label=center:{\large $\vdots$}]{};
\vertex at (-4.5,0.1)[label=center:{\large $\vdots$}]{};
\vertex at (5,0.1)[label=center:{\large $\vdots$}]{};


\vertex at (-4.5,1.5)[label=above:{{\small $P_{1}^{1}$}}]{};
\vertex at (-4.5,0.6)[label=above:{{\small $P_{1}^{2}$}}]{};
\vertex at (-4.5,-0.6)[label=below:{{\small $P_{1}^{m-1}$}}]{};
\vertex at (-4.5,-1.5)[label=below:{{\small $P_{1}^{m}$}}]{};

\vertex at (-1.5,1.5)[label=above:{{\small $P_{2}^{1}$}}]{};
\vertex at (-1.5,0.6)[label=above:{{\small $P_{2}^{2}$}}]{};
\vertex at (-1.5,-0.6)[label=below:{{\small $P_{2}^{m-1}$}}]{};
\vertex at (-1.5,-1.5)[label=below:{{\small $P_{2}^{m}$}}]{};

\vertex at (5,1.5)[label=above:{{\small $P_{n}^{1}$}}]{};
\vertex at (5,0.6)[label=above:{{\small $P_{n}^{2}$}}]{};
\vertex at (5,-0.6)[label=below:{{\small $P_{n}^{m-1}$}}]{};
\vertex at (5,-1.5)[label=below:{{\small $P_{n}^{m}$}}]{};


\end{tikzpicture}
\end{center}
\caption{The arc-colored oriented graph $D_{_H}$.}
\label{figure: oriented graph}
\end{figure}

With the NP-hardness of the RPOG problem on hand, we are now ready to complete the NP-hardness of the RKT problem.
For any instance $D$ with two distinct vertices $x$ and $y$ of the RPOG problem,
we will construct a tournament $T_{_D}$ in such a way that $T_{_{D}}$ has a rainbow kernel if and only if $D$ has a rainbow $(x,y)$-path.

Let $D$ be an arbitrary arc-colored oriented graph with two distinct vertices $x$ and $y$.
Let $\alpha, \beta,\gamma,\omega$ be four colors not appeared in $D$.
We construct an arc-colored tournament $T_{_D}$ with
$$V(T_{_D})=V(D)\cup \{x',x'',y',y''\}.$$
Let $N^{+}(x')=\{x,y',y''\}$, $N^{-}(x')=\{x''\}\cup V(D)\backslash \{x\}$,
$N^{+}(x'')=\{x'\}$, $N^{-}(x'')=V(D)\cup \{y',y''\}$,
$N^{+}(y')=\{x'',y''\}$, $N^{-}(y')=V(D)\cup \{x'\}$,
$N^{+}(y'')=\{x''\}$ and $N^{-}(y'')=V(D)\cup \{x',y'\}$.
For any two non-adjacent vertices in $D$ we add an arbitrary arc and color it with color $\alpha$.
Color the arcs from $V(D)$ to $\{x'',y',y''\}$ and the arc $x'x$ with color $\alpha$.
Color the arcs from $V(D)$ to $x'$ and the arcs in $\{x''x',x'y',x'y'',y'x'',y''x''\}$ with color $\beta$.
Color the arc $yy'$ with color $\gamma$ and the arc $y'y''$ with color $\omega$.
The resulting digraph is an arc-colored tournament,
see Figure \ref{figure: tournament},
in which dashed arcs, dotted arcs, thick dotted arcs and solid arcs represent arcs colored with colors $\alpha, \beta, \gamma, \omega$, respectively.
It therefore suffices to show the following.

\begin{figure}[ht]
\begin{center}
\begin{tikzpicture}
\tikzstyle{vertex}=[circle,inner sep=2pt, minimum size=0.1pt]


\draw (0,0) [line width=1.5pt] ellipse (2.6 and 0.9);


\vertex (x) [fill] at (-2.6,0)[]{};
\vertex at (-2.7,-0.1)[label=below:$x$]{};
\vertex (x1) [fill] at (-4.1,0)[label=below:$x'$]{};
\vertex (x2) [fill] at (-5.6,0)[]{};
\vertex at (-5.8,0)[label=below:$x''$]{};

\vertex (y) [fill] at (2.6,0)[]{};
\vertex at (2.7,-0.1)[label=below:$y$]{};
\vertex (y1) [fill] at (4.1,0)[]{};
\vertex at (4.2,0)[label=below:$y'$]{};
\vertex (y2) [fill] at (5.6,0)[]{};
\vertex at (5.7,0)[label=below:$y''$]{};

\vertex (q1) at (5.5,3)[]{};
\vertex (q2) at (7.05,3)[]{};
\path (q1) edge[dashed,line width=1.2pt] (q2);
\vertex (i) at (7.1,3)[label=right:$\alpha$]{};

\vertex (q3) at (5.5,2.6)[]{};
\vertex (q4) at (7.1,2.6)[]{};
\path (q3) edge[dotted,line width=1.1pt] (q4);
\vertex (i) at (7.1,2.6)[label=right:$\beta$]{};

\vertex (q5) at (5.5,2.2)[]{};
\vertex (q6) at (7.1,2.2)[]{};
\path (q5) edge[dotted,line width=2pt] (q6);
\vertex (i) at (7.1,2.2)[label=right:$\gamma$]{};

\vertex (q7) at (5.5,1.8)[]{};
\vertex (q8) at (7.1,1.8)[]{};
\path (q7) edge[line width=1.2pt] (q8);
\vertex (i) at (7.1,1.8)[label=right:$\omega$]{};

\path (x2) edge[directed,dotted,line width=1.1pt] (x1);
\path (x1) edge[directed,dashed,line width=1.1pt] (x);

\path (y) edge[directed,dotted,line width=2pt] (y1);

\path (y1) edge[directed,line width=1pt] (y2);

\path (y2) edge[directed,dotted, bend left=60,line width=1.1pt] (x2);
\path (y1) edge[directed,dotted, bend left=50,line width=1.1pt] (x2);

\path (x1) edge[directed,dotted, bend left=80,line width=1.1pt] (y1);
\path (x1) edge[directed,dotted, bend left=80,line width=1.1pt] (y2);


\vertex (z) [fill] at (0,0.9)[]{};
\vertex at (0,0)[label=center:$D$]{};

\path (z) edge[directed,dashed, bend right=80,line width=1.2pt] (x2);
\path (z) edge[directed,dashed, bend left=50,line width=1.2pt] (y1);
\path (z) edge[directed,dashed, bend left=70,line width=1.2pt] (y2);
\path (z) edge[directed,dotted, bend right=50,line width=1.1pt] (x1);


\vertex (u1) [fill] at (-1.8,0.2)[]{};
\vertex (u2) [fill] at (-0.8,0.2)[]{};

\vertex (u3) [fill] at (-1.8,-0.2)[]{};
\vertex (u4) [fill] at (-0.8,-0.2)[]{};

\vertex (v1) [fill] at (0.8,0.2)[]{};
\vertex (v2) [fill] at (1.8,0.2)[]{};

\vertex (v3) [fill] at (0.8,-0.2)[]{};
\vertex (v4) [fill] at (1.8,-0.2)[]{};

\path (u1) edge[directed,dashed,line width=1.2pt] (u2);
\path (v1) edge[directed,dashed,line width=1.2pt] (v2);

\path (u4) edge[directed,dashed,line width=1.2pt] (u3);
\path (v4) edge[directed,dashed,line width=1.2pt] (v3);


\end{tikzpicture}
\end{center}
\caption{The arc-colored tournament $T_{_D}$.}
\label{figure: tournament}
\end{figure}
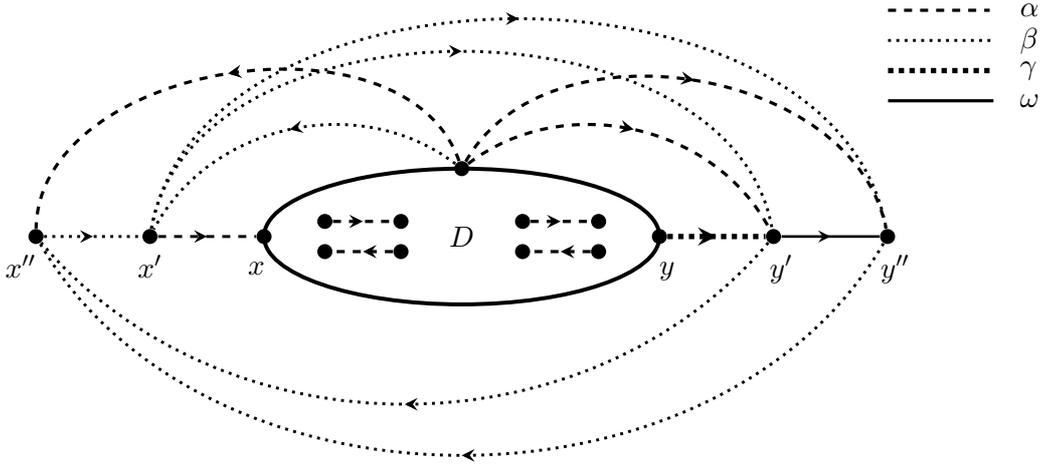

\begin{claim}
The tournament $T_{_D}$ has a rainbow kernel if and only if $D$ has a rainbow $(x,y)$-path.
\end{claim}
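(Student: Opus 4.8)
The starting point is the remark from the introduction that a rainbow kernel of a tournament is necessarily a single vertex $s$: any two vertices are joined by an arc, hence by a rainbow path of length one, so $T_{_D}$ has a rainbow kernel if and only if some vertex $s$ is reached by a rainbow path from every other vertex. The plan is to prove that $s$ can only be $x''$, and that $x''$ qualifies exactly when $D$ has a rainbow $(x,y)$-path.

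First I would record the local structure of the four added vertices. We have $N^{+}(x'')=\{x'\}$ and $N^{+}(y'')=\{x''\}$; the arcs $x''x'$, $y''x''$, $y'x''$, $x'y'$, $x'y''$ all receive colour $\beta$; the arc $x'x$ and every arc from $V(D)$ into $\{x'',y',y''\}$ other than $yy'$ receive colour $\alpha$; the arc $yy'$ receives $\gamma$; the arc $y'y''$ receives $\omega$; and $\alpha,\beta,\gamma,\omega$ do not occur inside $D$. Since $d^{+}(y'')=d^{+}(x'')=1$, every path leaving $y''$ of length at least two must begin $y'',x'',x'$ and therefore repeats the colour $\beta$; hence $y''$ reaches only $x''$ by a rainbow path. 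Consequently, if $\{s\}$ is a rainbow kernel then $s=y''$ or $y''$ reaches $s$ by a rainbow path, which forces $s=x''$; so the only two candidates are $x''$ and $y''$.

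The heart of the argument is the equivalence: $T_{_D}$ has a rainbow $(x',x'')$-path if and only if $D$ has a rainbow $(x,y)$-path (and, by the same reasoning, $T_{_D}$ has a rainbow $(x'',y'')$-path if and only if $D$ has a rainbow $(x,y)$-path). Given a rainbow $(x,y)$-path $Q$ of $D$, the path consisting of $x'\xrightarrow{\alpha}x$, followed by $Q$, followed by $y\xrightarrow{\gamma}y'\xrightarrow{\beta}x''$ has colour sequence $\alpha$, the distinct old colours of $Q$, $\gamma$, $\beta$, and is therefore rainbow; together with $v\xrightarrow{\alpha}x''$ for all $v\in V(D)$ and $y'\xrightarrow{\beta}x''$, $y''\xrightarrow{\beta}x''$, this shows $\{x''\}$ is a rainbow kernel, giving one direction of the Claim. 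For the converse, take a rainbow $(x',x'')$-path $P$: its first arc cannot land on $y'$ or $y''$, since from there $P$ only enters the $\beta$-cluster and repeats $\beta$, so $P$ begins with $x'\xrightarrow{\alpha}x$ and uses no further $\alpha$-arc; hence the arc of $P$ entering $x''$ is not one of the $\alpha$-arcs from $V(D)$, so it comes from $y'$ or $y''$, and tracing the $\beta/\omega$-coloured tail of $P$ backwards forces $P$ to end $\ldots,y,y',x''$ or $\ldots,y,y',y'',x''$ with $y$ entered through the unique $\gamma$-arc $yy'$; the segment of $P$ from $x$ to $y$ then lies entirely in $V(D)$, is rainbow, and avoids $\alpha$, so it uses only original arcs of $D$ and is a rainbow $(x,y)$-path in $D$. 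The analysis of rainbow $(x'',y'')$-paths is identical, with model path $x''\xrightarrow{\beta}x'\xrightarrow{\alpha}x\,\ldots\,y\xrightarrow{\gamma}y'\xrightarrow{\omega}y''$, and it lets us discard $s=y''$: a rainbow kernel $\{y''\}$ would make $x''$ reach $y''$ by a rainbow path and hence produce a rainbow $(x,y)$-path in $D$. Putting the pieces together, $T_{_D}$ has a rainbow kernel if and only if $s=x''$ works if and only if $D$ has a rainbow $(x,y)$-path, which is the Claim.

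The step I expect to be the main obstacle is the converse half of this equivalence: ruling out that a rainbow $(x',x'')$-path (or $(x'',y'')$-path) sneaks through $V(D)$ and the added vertices along some unforeseen route. It is essentially bookkeeping with the colour classes — every arc incident with $x',x'',y',y''$ except $x'x$, $yy'$, $y'y''$ is coloured $\alpha$ or $\beta$, so any detour is immediately punished by a repeated colour — but one has to be careful to exhaust all cases, in particular to check that the $x$-to-$y$ part of such a path really stays inside $V(D)$ and never uses an added $\alpha$-arc.
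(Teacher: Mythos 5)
Your proof is correct and follows essentially the same route as the paper's: reduce a rainbow kernel of a tournament to a single vertex, use the outdegree-one, $\beta$-coloured arcs at $y''$ and $x''$ to narrow the candidates to $\{x''\}$ and $\{y''\}$, and then trace rainbow paths through the gadget's colour classes in both directions. The only cosmetic difference is that you exhibit $x''$ (rather than $y''$) as the kernel in the sufficiency direction and organize the backward trace from the first arc instead of the colour of the last arc; both variants are sound.
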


\begin{proof}
We first show the sufficiency.
Note that if a tournament has a rainbow kernel then it contains exactly one vertex.
Assume that a rainbow $(x,y)$-path $P^{*}$ in $D$ exists.
We deduce that $y''$ forms a rainbow kernel.
One can see that $x''x'xP^{*}yy'y''$ is a rainbow $(x'',y'')$-path.
So it suffices to consider the vertices not in the above rainbow $(x'',y'')$-path,
which can reach $y''$ by an arc and is clearly a rainbow path, see Figure \ref{figure: tournament}.

Now we show the necessarity.
Assume that $T_{_{D}}$ has a rainbow kernel.
Since $N^{+}(y'')=\{x''\}$, $N^{+}(x'')=\{x'\}$ and $c(y''x'')=c(x''x')=\beta$,
we can get that $y''$ cannot reach any vertex in $V(T_{_D})\backslash \{x'',y''\}$ by a rainbow path.
This implies that the possible rainbow kernel consists of either $x''$ or $y''$.
We complete the proof by showing that a rainbow $(x,y)$-path in $D$ will appear in each of the above two cases.

If $x''$ forms a rainbow kernel of $T_{_D}$,
then, by definition, there exists a rainbow $(x',x'')$-path, say $P'=x'u_{1}\cdots u_{s}x''$.
If $c(u_{s}x'')=\alpha$,
then, since $N^{+}(x')=\{x,y',y''\}$ and $c(x'x)=\alpha$,
we have $u_{1}\in \{y',y''\}$.
If $u_{1}=y'$,
then, since $N^{+}(y')=\{y''\}$ and $N^{+}(y'')=\{x''\}$,
the path will be $x'\rightarrow y'\rightarrow y''\rightarrow x''$,
which is clearly not a rainbow path, a contradiction.
If $u_{1}=y''$,
then, since $N^{+}(y'')=\{x''\}$,
the path will be $x'\rightarrow y''\rightarrow x''$,
which is clearly not a rainbow path either, a contradiction.
Now assume that $c(u_{s}x'')=\beta$.
It follows that $u_{s}\in \{y',y''\}$ and $u_{1}=x$.
Note that the colors $\alpha$ and $\beta$ have appeared on $P'$ and the path $P'$ is rainbow.
Thus, if $u_{s}=y'$ then $u_{s-1}=y$;
if $u_{s}=y''$ then $u_{s-1}=y'$, $u_{s-2}=y$.
Moreover, in either of the above two cases, there exists a rainbow $(x,y)$-path with no arcs colored with $\alpha$.
This implies that $D$ has a rainbow $(x,y)$-path.

If $y''$ forms a rainbow kernel of $T_{_D}$,
then $T_{_D}$ has a rainbow $(x'',y'')$-path, say $P''=x''\rightarrow v_{1}\rightarrow \cdots \rightarrow v_{t}\rightarrow y''$.
Since $N^{+}(x'')=\{x'\}$,
we have $v_{1}=x'$.
Similarly, since $c(x''x')=\beta$, $N^{+}(x')=\{x,y',y''\}$ and $c(x'y')=c(x'y'')=\beta$,
we have $v_{2}=x$.
Now note that both $\alpha$ and $\beta$ have appeared on $P''$.
So we have $v_{k-1}=y'$ and $v_{k-2}=y$.
It therefore follows that there exists a rainbow $(x,y)$-path in $T_{_D}$ with no arcs colored with $\alpha$,
which implies that $D$ has a rainbow $(x,y)$-path.
\end{proof}

We are left to show that the two reductions are polynomial,
which follows directly from our constructions.

\section{Proof of Theorem \ref{thm: rainbow kernels in tournaments}}
\label{section: proof of thm 2}

For convenience,
we shall call a vertex $v$ {\em good}
if all other vertices can reach $v$ by a rainbow path.
Let $T$ be an arc-colored $n$-vertex tournament.
We prove the result by induction on $n$.

The result clearly holds for $n=3$.
Now assume that $n\geq 4$ and the result holds for every arc-colored tournament with order at most $n-1$.
For each vertex $v$ of $T$,
by induction hypothesis,
the subtournament $T-v$ has a good vertex.
Denote by $v^*$ the good vertex of $T-v$ corresponding to the given coloring of $T$.
Then $v^{*}\rightarrow v$,
since otherwise $v^{*}$ is a good vertex of $T$.
For two distinct vertices $u$ and $v$,
we claim that $u^*\neq v^*$.
If not,
then by the definition of $u^{*}$
there exist a rainbow $(v,u^{*})$-path in $T-u$ and a rainbow $(u,u^{*})$-path in $T-v$.
It follows immediately that there exist a rainbow $(v,u^{*})$-path and a rainbow $(u,u^{*})$-path in $T$.
Thus, $u^{*}$ is a good vertex of $T$, a contradiction.

Now consider the subdigraph $H$ induced by the arc set $\{v^{*}v:v\in V(T)\}$.
Since each vertex of $H$ has indegree at least one and outdegree at most one,
then $H$ consists of vertex-disjoint cycles.
If $H$ has at least two cycles,
then by induction hypothesis the induced subtournament on each cycle has a good vertex,
which is obviously a good vertex of $T$, a contradiction.
So $H$ consists of one cycle, i.e., $H$ is a Hamilton cycle of $T$.

Let $H=(v_{0},v_{1},\ldots,v_{n-1},v_{0})$.
By the choices of the arcs,
there exists no rainbow $(v_{i},v_{i-1})$-path in $T$ for any $0\leq i\leq n-1$,
where the subtraction is modulo $n$.
Consider the three vertices $v_{0},v_{1},v_{2}$,
if $v_{2}\rightarrow v_{0}$,
then, since each triangle is colored with at least two colors,
we have either $v_{2}\rightarrow v_{0}\rightarrow v_{1}$ is a rainbow $(v_{2},v_{1})$-path
or $v_{1}\rightarrow v_{2}\rightarrow v_{0}$ is a rainbow $(v_{1},v_{0})$-path, a contradiction.
So $v_{0}\rightarrow v_{2}$.
In fact,
one can see from the above simple proof that $v_{i}\rightarrow v_{i+2}$ for any $0\leq i\leq n-1$,
where the addition is modulo $n$.

Let $t$ be the maximum integer such that
$v_{i}\rightarrow \{v_{i+1},v_{i+2},\ldots,v_{i+t-1}\}$ for any $0\leq i\leq n-1$, the addition is modulo $n$.
By the maximality of $t$,
there exists an integer $j$ such that
$v_{j}\rightarrow \{v_{j+1},v_{j+2},\ldots,v_{j+t-1}\}$ and $v_{j+t}\rightarrow v_{j}$.
Assume w.l.o.g. that $j=0$.
We will complete the proof by getting a contradiction that there exists a rainbow $(v_{i},v_{i-1})$-path for some $i\in \{1,\ldots,t\}$,
which follows directly from the lemma below.

\begin{lemma}\label{lemma: T*}
Let $T^{*}$ be an arc-colored tournament on vertex set $\{v_{0},v_{1},\ldots,v_{t}\}$ satisfying that
$v_{i}\rightarrow v_{j}$ for any $0\leq i<j\leq t$ with only one exception that $v_{t}\rightarrow v_{0}$,
and every strongly connected $k$-vertex subtournament $T_{k}^{*}$ of $T^{*}$, $3\leq k\leq t$, is colored with at least $k-1$ colors.
Then $T^{*}$ has a rainbow $(v_{i},v_{i-1})$-path for some $i\in \{1,\ldots,t\}$.
\end{lemma}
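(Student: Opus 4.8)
The tournament $T^{*}$ is almost transitive: it is a transitive tournament on $v_0, v_1, \ldots, v_t$ (with $v_i \to v_j$ for $i < j$) plus the single back-arc $v_t \to v_0$, which creates exactly one cycle through the "spine". The goal is to find some $i$ with a rainbow $(v_i, v_{i-1})$-path. My plan is to argue by contradiction: assume that for \emph{every} $i \in \{1, \ldots, t\}$ there is no rainbow $(v_i, v_{i-1})$-path, and extract from this a strongly connected subtournament on some $k$ vertices that is forced to use only $k-2$ colors, contradicting the hypothesis. The natural candidate for the strongly connected subtournament is the whole cycle $v_0 v_1 \cdots v_t v_0$ (which is strongly connected since every vertex lies on that Hamilton-type cycle of $T^{*}$), so the target contradiction is that $T^{*}$ itself, on $t+1$ vertices, uses at most $t-1$ colors.

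**Key steps.** First I would observe the basic obstruction: the arc $v_1 \to v_0$? No --- note $v_0 \to v_1$, so a rainbow $(v_1,v_0)$-path must go $v_1 \to \cdots$ and return; the only way back to $v_0$ is through $v_t$ via the back-arc $v_t \to v_0$. So \emph{any} $(v_i, v_{i-1})$-path for $i \geq 2$ has the shape: go up the spine (or skip forward) from $v_i$ to $v_t$, take $v_t \to v_0$, then go up the spine from $v_0$ to $v_{i-1}$; for $i = 1$ it is just $v_1 \to \cdots \to v_t \to v_0$. The "no rainbow $(v_i,v_{i-1})$-path" assumption therefore says: for each $i$, every such up-down walk repeats a color. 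I would then look at the two extreme simple paths: $v_1 v_2 \cdots v_t v_0$ (a path of length $t$ using $t$ arcs, hence at most $t$ colors but it must repeat one), and more usefully the family of "direct" paths $v_i v_{i+1} \cdots v_t v_0 v_1 \cdots v_{i-1}$ for each $i$. Comparing consecutive members of this family and using that each must contain a color-repetition, I would try to pin down \emph{where} the repeated pair of equal-colored arcs can sit, ideally forcing a small set of arcs among $\{v_0 v_1, v_1 v_2, v_{t-1}v_t, v_t v_0\}$ (or a short initial/terminal segment) to be monochromatic, which already starts shrinking the color count.

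**The inductive reduction.** The cleaner route is probably induction on $t$. For $t = 3$: $T^{*}$ is the tournament on $v_0,v_1,v_2,v_3$ with back-arc $v_3 \to v_0$; no rainbow $(v_i,v_{i-1})$-path for $i=1,2,3$ must force at most $2$ colors on this $4$-vertex strongly connected tournament, contradicting the "$\geq k-1 = 3$ colors" hypothesis --- this should be a finite check. For the inductive step, I would try to delete a vertex --- the natural choice is $v_0$ or $v_t$, or an "interior" vertex $v_j$ --- so that the smaller tournament $T^{*} - v_j$ still has the almost-transitive-plus-one-back-arc shape (deleting an interior $v_j$, $1 \le j \le t-1$, preserves this: the back-arc $v_t \to v_0$ survives). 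If no rainbow $(v_i, v_{i-1})$-path exists in $T^{*}$, then certainly none exists in $T^{*} - v_j$ among \emph{its} consecutive vertices, so by induction $T^{*} - v_j$ (on $t$ vertices) uses at most $t - 2$ colors. The crux is then to show that adding $v_j$ back can introduce at most one new color --- equivalently, that the arcs incident to $v_j$ (namely $v_{j-1} \to v_j$, $v_j \to v_{j+1}$, and the "skip" arcs $v_a \to v_j \to v_b$) carry essentially one genuinely new color beyond those already on $T^{*} - v_j$ --- which would give $T^{*}$ at most $t - 1$ colors, the desired contradiction.

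**Main obstacle.** The hard part is precisely this last claim: controlling the colors on the arcs incident to the reinserted vertex $v_j$, and choosing $j$ well. A single vertex has $t$ incident arcs, so a priori it could introduce many colors; I need to use the "no rainbow $(v_i, v_{i-1})$-path" hypothesis to force collisions between these new arcs and old ones. Concretely, I expect to choose $j$ cleverly (perhaps $j$ minimal such that $v_{j-1}v_j$ has a color not appearing in $T^{*}-v_j$, or use a counting/pigeonhole argument on which $v_i$ fails), and then build, for a suitable $i$, an explicit path of the form $v_i \to \cdots \to v_t \to v_0 \to \cdots \to v_{i-1}$ that passes through $v_j$ using a fresh color, and show it \emph{would} be rainbow unless the color budget of $T^{*}-v_j$ is already strictly below $t-2$ --- closing the induction. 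Getting the bookkeeping of which arcs can legally be reused without creating a rainbow path, and making sure the chosen path is simple, is where the real work lies; I would organize it by first establishing that all "skip" arcs $v_a \to v_b$ with $a < b$ can be assumed to reuse colors already present on the spine segment $v_a v_{a+1}\cdots v_b$ (otherwise that very skip shortcut produces a rainbow detour), which drastically limits how colorful $T^{*}$ can be.
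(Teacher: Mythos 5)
There is a genuine gap, and it sits exactly at the step your whole plan rests on. Your inductive reduction deletes an interior vertex $v_j$ and asserts that ``if no rainbow $(v_i,v_{i-1})$-path exists in $T^{*}$, then certainly none exists in $T^{*}-v_j$ among its consecutive vertices.'' This is false: after deleting $v_j$ the new consecutive pair is $(v_{j+1},v_{j-1})$, and the hypothesis of your contradiction says nothing about rainbow $(v_{j+1},v_{j-1})$-paths. Worse, the paper's proof shows that in a minimal counterexample a rainbow $(v_i,v_j)$-path \emph{must} exist whenever $i-j\geq 2$ (otherwise one deletes the vertices strictly between $v_j$ and $v_i$ to get a smaller counterexample), so the premise of your induction fails for \emph{every} choice of interior vertex. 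A second problem is your target contradiction: the hypothesis only constrains strongly connected $k$-vertex subtournaments for $3\leq k\leq t$, i.e.\ proper subtournaments, so showing that the full $(t+1)$-vertex tournament uses at most $t-1$ colors contradicts nothing; you would need to exhibit some strongly connected $k$-vertex subtournament, $k\leq t$, carrying at most $k-2$ colors. Finally, the two claims you lean on --- that a reinserted vertex contributes at most one new color, and that every skip arc $v_a\rightarrow v_b$ may be assumed to reuse a color from the spine segment between $v_a$ and $v_b$ --- are exactly the hard content and are not justified; a single fresh-colored skip arc does not by itself create a rainbow $(v_i,v_{i-1})$-path, since any such path must travel all the way around through $v_t\rightarrow v_0$.

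For comparison, the paper's argument takes a minimal counterexample and first proves the opposite of what your induction needs (rainbow paths across every gap of length at least $2$), then pins down explicit colors on specific arcs ($c(v_tv_0)=c(v_0v_{t-1})=c(v_1v_t)=\alpha$, $c(v_{t-1}v_t)=\beta$, and $c(v_iv_t)\neq\beta$ for $i\leq t-2$), and gets its color-counting leverage not from the original tournament but from carefully built auxiliary tournaments: it deletes $v_t$ (or a whole tail $\{v_{y+1},\ldots,v_t\}$), reverses one arc to restore the almost-transitive shape, and merges color classes so that the smaller tournament still satisfies the $k-1$ hypothesis; minimality then yields a rainbow path in the auxiliary tournament that can be lifted back to a forbidden rainbow $(v_i,v_{i-1})$-path in $T^{*}$. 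That lifting mechanism is the missing idea in your sketch, and without it the ``at most one new color per reinserted vertex'' step has no support.
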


\begin{proof}
Assume that $T^{*}$ is a counterexample with minimum number of vertices.
By the definition of $T^{*}$,
every strongly connected subtournament of $T^{*}$ contains the arc $v_{t}v_{0}$.
This observation will be used several times in this proof.
Now we first present the following result.

\begin{itemize}
  \item There exists a rainbow $(v_{i},v_{j})$-path for any $2\leq i\leq t$ and $0\leq j\leq i-2$.
\end{itemize}

Assume the opposite that there exists no rainbow $(v_{i},v_{j})$-path for some $i,j$ with $i-j\geq 2$.
Then $M=\{v_{j+1},\ldots,v_{i-1}\}\neq \emptyset$.
Let $T'$ be the subtournament induced by $V(T^{*})\backslash M$.
Clearly, $|V(T')|<|V(T^{*})|$,
the tournament $T'$ satisfies the coloring conditions in the claim and is also a counterexample of the claim, a contradiction to the minimality of $T^{*}$.
Besides, we can get the following useful result.

\begin{itemize}
  \item $c(v_{i}v_{t})\neq c(v_{t-1}v_{t})$ for each $1\leq i\leq t-2$.
\end{itemize}

Assume the opposite that $c(v_{i}v_{t})=c(v_{t-1}v_{t})$ for some $1\leq i\leq t-2$.
Since $|(t-1)-(i-1)|=|t-i|\geq 2$,
by the above analysis,
$T^{*}$ has a rainbow $(v_{t-1},v_{i-1})$-path $P$.
It is not difficult to see that $P$ must pass through the arc $v_{t-1}v_{t}$.
So the path $P'$ obtained from $P$ by replacing $v_{t-1}v_{t}$ with $v_{i}v_{t}$ is obviously a rainbow $(v_{i},v_{i-1})$-path in $T^{*}$, a contradiction.

Now assume w.l.o.g. that $c(v_{t}v_{0})=\alpha$.
Since $v_{t}\rightarrow v_{0}$, $v_{0}\rightarrow v_{t-1}$ and $T^{*}$ has no rainbow $(v_{t},v_{t-1})$-path,
we have $c(v_{0}v_{t-1})=c(v_{t}v_{0})=\alpha$.
Note that $(v_{0},v_{t-1},v_{t},v_{0})$ is a triangle
and each triangle is colored with at least 2 colors.
So $c(v_{t-1}v_{t})\neq \alpha$ and we can let $c(v_{t-1}v_{t})=\beta$.
It then follows that $c(v_{i}v_{t})\neq \beta$ for each $1\leq i\leq t-2$.

\begin{itemize}
  \item There exists some $2\leq i\leq t-2$ such that $c(v_{i}v_{t-1}),c(v_{i}v_{t}),\alpha,\beta$ are four distinct colors.
\end{itemize}

If not,
then $\{c(v_{i}v_{t-1}),c(v_{i}v_{t})\}$ has at most one color not in $\{\alpha,\beta\}$ for any $2\leq i\leq t-2$.
In view of the path $v_{1}\rightarrow v_{t}\rightarrow v_{0}$ and the fact that $T^{*}$ has no rainbow $(v_{1},v_{0})$-path,
we have $c(v_{1}v_{t})=c(v_{t}v_{0})=\alpha$, see Figure \ref{figure: T*}.
So $\{c(v_{i}v_{t-1}),c(v_{i}v_{t})\}$ has at most one color not in $\{\alpha,\beta\}$ for any $1\leq i\leq t-2$.
Let $T''$ be the tournament obtained from $T^{*}$
by deleting $v_{t}$ and reversing the orientation of the arc $v_{0}v_{t-1}$, i.e., $v_{t-1}v_{0}\in A(T'')$.
Define the arc coloring of $T''$ as follows.
Color the arc $v_{t-1}v_{0}$ with $\alpha$,
color the arc $vv_{t-1}$ with $\gamma$ if $\{c(vv_{t-1}),c(vv_{t})\}$ has a color $\gamma\notin \{\alpha, \beta\}$ in $T^{*}$,
and color the arc $vv_{t-1}$ with $\alpha$ if $\{c(vv_{t-1}),c(vv_{t})\}\subseteq \{\alpha,\beta\}$ in $T^{*}$.
For other arcs $v'v''$,
if $c(v'v'')=\beta$ in $T^{*}$, then change its color to $\alpha$ in $T''$;
if $c(v'v'')\neq \beta$ in $T^{*}$, then remain its color in $T''$.
Note that each strongly connected $k$-vertex subtournament $T''_{k}$ of $T''$, $3\leq k\leq t$, is colored with at least $k-1$ colors.
If some $T''_{k}$ is colored with at most $k-2$ colors,
then the strongly connected $(k+1)$-vertex subtournament $T_{k+1}^{*}$ of $T^{*}$ induced by $V(T''_{k})\cup \{v_{t}\}$
is colored with at most $k-1$ colors, a contradiction.
Recall that $T^{*}$ is a minimum counterexample of the claim and $|V(T'')|<|V(T^{*})|$.
It therefore follows that $T''$ has a rainbow $(v_{i},v_{i-1})$-path $P$ for some $1\leq i\leq t-1$.
Note that $P$ must pass through the arc $v_{t-1}v_{0}\in A(T'')$ which is colored with $\alpha$
and thus no other arc of $P$ is colored with $\alpha$.
By the definition of the arc coloring of $T''$,
each arc in $P-v_{t-1}v_{0}$ is colored with some color not in $\{\alpha,\beta\}$ in the original tournament $T^{*}$.
If $i=t-1$,
then, since $c(v_{t-1}v_{t})=\beta$ in $T^{*}$,
we get that $v_{t-1}v_{t}v_{0}Pv_{t-2}$ is a rainbow $(v_{t-1},v_{t-2})$-path in $T^{*}$, a contradiction.
So $i\leq t-2$.
Let $w$ be the predecessor of $v_{t-1}$ on the path $P$.
Then $wv_{t-1}$ is colored with a color $\gamma\notin \{\alpha,\beta\}$.
This implies that either $c(wv_{t})=\gamma$ or $c(wv_{t-1})=\gamma$ in $T^{*}$.
So either $v_{i}Pwv_{t}v_{0}Pv_{i-1}$ or $v_{i}Pwv_{t-1}v_{t}v_{0}Pv_{i-1}$ is a rainbow $(v_{i},v_{i-1})$-path in $T^{*}$, a contradiction.

Let $x$ be the minimum integer such that
$c(v_{x}v_{t}),c(v_{x}v_{t-1}),\alpha,\beta$ are four distinct colors.
Recall that $c(v_{1}v_{t})=\alpha$.
So $x\geq 2$.
Assume w.l.o.g. that $c(v_{x}v_{t})=\alpha^{*}$ and $c(v_{x}v_{t-1})=\beta^{*}$.
Since there exists no rainbow $(v_{x},v_{x-1})$-path,
we have $c(v_{0}v_{x-1})=\alpha$, see Figure \ref{figure: T*};
otherwise, either $v_{x}\rightarrow v_{t}\rightarrow v_{0}\rightarrow v_{x-1}$ or $v_{x}\rightarrow v_{t-1}\rightarrow v_{t}\rightarrow v_{0}\rightarrow v_{x-1}$ is a rainbow $(v_{x},v_{x-1})$-path.
Since the triangle $(v_{0},v_{x-1},v_{t},v_{0})$ is colored with at least 2 colors and $c(v_{x-1}v_{t})\neq \beta$,
we have $c(v_{x-1}v_{t})\notin \{\alpha,\beta\}$.

Let $y$ be the minimum integer such that
$c(v_{0}v_{y})=\alpha$ and $c(v_{y}v_{t})\notin \{\alpha,\beta\}$.
Recall that $c(v_{1}v_{t})=\alpha$.
So $y\geq 2$.
Assume w.l.o.g. that $c(v_{y}v_{t})=\gamma$.
In view of the vertex $v_{x-1}$,
we get that $y$ exists and $y\leq x-1$.
By the definitions of $x$ and $y$,
for each $1\leq i\leq y-1$,
we have that $\{c(v_{i}v_{t-1}),c(v_{i}v_{t})\}$ has at most one color not in $\{\alpha,\beta\}$
and thus $\{c(v_{i}v_{y}),c(v_{i}v_{t-1}),c(v_{i}v_{t})\}$ has at most two colors not in $\{\alpha,\beta,\gamma\}$.

\begin{itemize}
  \item $\{c(v_{z}v_{y}),c(v_{z}v_{t-1}),c(v_{z}v_{t})\}$ has exactly two colors not in $\{\alpha,\beta,\gamma\}$ for some $1\leq z\leq y-1$.
\end{itemize}

Assume the opposite that $\{c(v_{i}v_{y}),c(v_{i}v_{t-1}),c(v_{i}v_{t})\}$ has at most one color not in $\{\alpha,\beta,\gamma\}$ for any $1\leq i\leq y-1$.
Then we consider the tournament $T'''$ obtained from $T^{*}$
by deleting the vertices in $\{v_{y+1},\ldots,v_{t}\}$ and reversing the orientation of the arc $v_{0}v_{y}$, i.e., $v_{y}v_{0}\in A(T''')$.
Define the arc coloring of $T'''$ as follows.
Color the arc $v_{y}v_{0}$ with $\alpha$,
color the arc $vv_{y}$ with $\eta$ if $\{c(vv_{y}),c(vv_{t-1}),c(vv_{t})\}$ has a color $\eta \notin \{\alpha,\beta,\gamma\}$ in $T^{*}$,
and color the arc $vv_{y}$ with $\alpha$ if $\{c(vv_{y}),c(vv_{t-1}),c(vv_{t})\}\subseteq \{\alpha,\beta,\gamma\}$ in $T^{*}$.
For other arcs $v'v''$,
if $c(v'v'')\in \{\alpha,\beta,\gamma\}$ in $T^{*}$, then change its color to $\alpha$ in $T'''$;
if $c(v'v'')\notin \{\alpha,\beta,\gamma\}$ in $T^{*}$, then remain its color in $T'''$.
Note that each strongly connected $k$-vertex subtournament $T'''_{k}$ of $T'''$, $3\leq k\leq y+1$, is colored with at least $k-1$ colors.
If some $T'''_{k}$ is colored with at most $k-2$ colors,
then the strongly connected $(k+2)$-vertex subtournament $T_{k+2}^{*}$ of $T^{*}$ induced by $V(T'''_{k})\cup \{v_{t-1},v_{t}\}$
is colored with at most $k$ colors, a contradiction.
By the minimality of $T^{*}$ and the fact $|V(T''')|<|V(T^{*})|$,
we get that $T'''$ has a rainbow $(v_{i},v_{i-1})$-path $P$ for some $1\leq i\leq y$.
Note that $P$ must pass through the arc $v_{y}v_{0}\in A(T''')$ which is colored with $\alpha$
and thus no other arc of $P$ is colored with $\alpha$.
By the definition of the arc coloring of $T'''$,
each arc in $P-v_{y}v_{0}$ is colored with some color not in $\{\alpha,\beta,\gamma\}$ in the original tournament $T^{*}$.
If $i=y$,
then, since $c(v_{y}v_{t})=\gamma$ in $T^{*}$,
we get that $v_{y}v_{t}v_{0}Pv_{y-1}$ is a rainbow $(v_{y},v_{y-1})$-path in $T^{*}$, a contradiction.
So $i\leq y-1$.
Let $w'$ be the predecessor of $y$ on the path $P$.
Then $w'v_{y}$ is colored with a color $\eta\notin \{\alpha,\beta,\gamma\}$ in $T'''$.
This implies that $c(w'v_{t})=\eta$ or $c(w'v_{t-1})=\eta$ or $c(w'v_{y})=\eta$ in $T^{*}$.
Recall that $c(v_{t-1}v_{t})=\beta$, $c(v_{y}v_{t})=\gamma$ and the colors $\alpha,\beta,\gamma,\eta$ are different.
So $v_{i}Pw'v_{t}v_{0}Pv_{i-1}$
or $v_{i}Pw'v_{t-1}v_{t}v_{0}Pv_{i-1}$
or $v_{i}Pw'v_{y}v_{t}v_{0}Pv_{i-1}$ is a rainbow $(v_{i},v_{i-1})$-path in $T^{*}$, a contradiction.

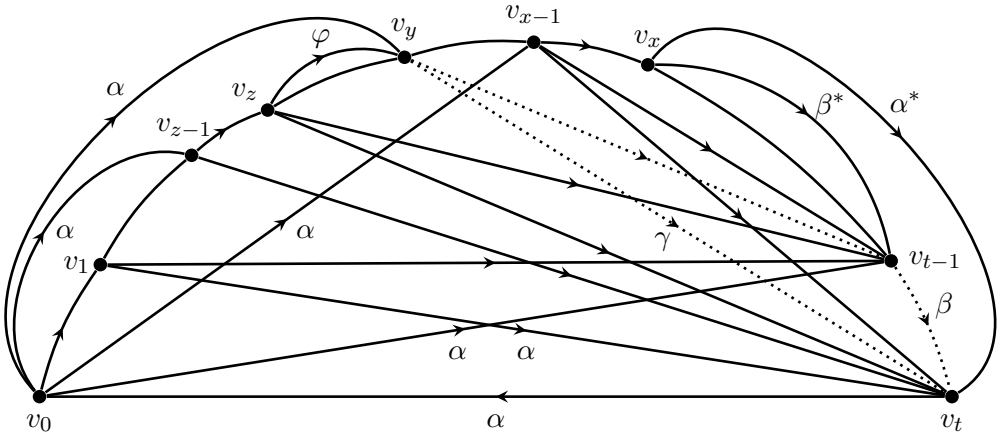
\begin{figure}[ht]
\begin{center}
\begin{tikzpicture}
\tikzstyle{vertex}=[circle,inner sep=1.8pt, minimum size=0.1pt]


\vertex (a)[fill] at (-6,0)[label=below:$v_{0}$]{};
\vertex (b)[fill] at (6,0)[label=below:$v_{t}$]{};
\vertex (c)[fill] at (5.2,1.8)[label=right:$v_{t-1}$]{};
\vertex (d)[fill] at (2,4.4)[label=above:$v_{x}$]{};
\vertex (e)[fill] at (0.5,4.7)[label=above:$v_{x-1}$]{};
\vertex (f)[fill] at (-1.2,4.5)[label=above:$v_{y}$]{};

\vertex (q)[fill] at (-5.2,1.75)[]{};
\vertex (g)[fill] at (-3,3.8)[]{};
\vertex (h)[fill] at (-4,3.2)[]{};

\vertex (i)[] at (-5.1,1.75)[label=left:$v_{1}$]{};
\vertex (i)[] at (-3.3,3.7)[label=above:$v_{z}$]{};
\vertex (i)[] at (-4.1,3.2)[label=above:$v_{z-1}$]{};

\vertex (i)[] at (0,0)[label=below:$\alpha$]{};
\vertex (i)[] at (-0.5,0.9)[label=below:$\alpha$]{};
\vertex (i)[] at (0.4,0.9)[label=below:$\alpha$]{};
\vertex (i)[] at (-2.5,2.5)[label=below:$\alpha$]{};
\vertex (i)[] at (-5.66,2.5)[label=below:$\alpha$]{};
\vertex (i)[] at (-5,4.4)[label=below:$\alpha$]{};
\vertex (i)[] at (5.39,4.28)[label=below:$\alpha^{*}$]{};
\vertex (i)[] at (4.4,4.25)[label=below:$\beta^{*}$]{};

\vertex (i)[] at (5.9,1.2)[label=center:$\beta$]{};
\vertex (i)[] at (2.2,2.41)[label=below:$\gamma$]{};
\vertex (i)[] at (-2.3,4.4)[label=above:$\varphi$]{};


\draw [directed,line width=1pt] (b)--(a);

\path (c) edge[dotted,directed,line width=1pt,bend left=9] (b);
\path (d) edge[line width=1pt,bend left=10] (c);
\path (e) edge[directed,line width=1pt,bend left=10] (d);

\path (a) edge[directed,line width=1pt] (c);
\path (e) edge[directed,line width=1pt] (b);
\path (e) edge[directed,line width=1pt] (c);

\path (a) edge[directed,line width=1pt] (e);

\path (a) edge[directed,line width=1pt,bend left=70] (h);
\path (g) edge[directed,line width=1pt,bend left=40] (f);

\path (d) edge[directed,line width=1pt,bend left=40] (c);
\path (d) edge[directed,line width=1pt,bend left=100] (b);
\path (a) edge[directed,line width=1pt,bend left=90] (f);

\path (f) edge[line width=1pt,bend left=10] (e);
\path (f) edge[dotted,directed,line width=1pt] (b);
\path (f) edge[dotted,directed,line width=1pt] (c);

\path (g) edge[line width=1pt,bend left=10] (f);
\path (h) edge[directed,line width=1pt,bend left=10] (g);

\path (g) edge[directed,line width=1pt] (b);
\path (g) edge[directed,line width=1pt] (c);

\path (h) edge[directed,line width=1pt] (b);

\path (a) edge[directed,line width=1pt,bend left=10] (q);
\path (q) edge[line width=1pt,bend left=10] (h);
\path (q) edge[directed,line width=1pt] (b);
\path (q) edge[directed,line width=1pt] (c);

\end{tikzpicture}
\end{center}
\caption{The arc-colored tournament $T^{*}$.}
\label{figure: T*}
\end{figure}

Now let $v_{z}$ be a vertex with $1\leq z\leq y-1$ such that $\{c(v_{z}v_{y}),c(v_{z}v_{t-1}),c(v_{z}v_{t})\}$ has exactly two colors not in $\{\alpha,\beta,\gamma\}$.
Assume that $\varphi,\omega$ are two colors not in $\{\alpha,\beta,\gamma\}$ and assume w.l.o.g. that $c(v_{z}v_{y})=\varphi$ and $\omega\in \{c(v_{z}v_{t-1}),c(v_{z}v_{t})\}$.
If $z=1$,
then $c(v_{1}v_{t-1})=\omega$ since $c(v_{1}v_{t})=\alpha$.
It follows that there exists a rainbow $(v_{1},v_{0})$-path $v_{1}\rightarrow v_{t-1}\rightarrow v_{t}\rightarrow v_{0}$, a contradiction.
So $z\geq 2$.
We will get a contradiction by showing that $c(v_{z-1}v_{0})=\alpha$ and $c(v_{z-1}v_{t})\notin \{\alpha,\beta\}$.
Recall that $T^{*}$ has no rainbow $(v_{z},v_{z-1})$-path
and the colors in $\{\alpha,\beta,\gamma,\varphi,\omega\}$ are different.
If $c(v_{z}v_{t})=\omega$,
then, in view of the two rainbow paths
$v_{z}\rightarrow v_{y}\rightarrow v_{t}\rightarrow v_{0}$
and $v_{z}\rightarrow v_{t}\rightarrow v_{0}$,
we have $c(v_{0}v_{z-1})=\alpha$.
If $c(v_{z}v_{t-1})=\omega$,
then, in view of the two rainbow paths
$v_{z}\rightarrow v_{y}\rightarrow v_{t}\rightarrow v_{0}$
and $v_{z}\rightarrow v_{t-1}\rightarrow v_{t}\rightarrow v_{0}$,
we have $c(v_{0}v_{z-1})=\alpha$.
Since the triangle $(v_{0},v_{z-1},v_{t},v_{0})$ is colored with at least 2 colors,
we have $c(v_{z-1}v_{t})\neq \alpha$.
Note that $c(v_{z-1}v_{t})\neq c(v_{t-1}v_{t})=\beta$.
So $c(v_{z-1}v_{t})\notin \{\alpha,\beta\}$.
The existence of $v_{z-1}$ contradicts the minimality of $y$.
The proof is complete.
\end{proof}

\section{Proof of Theorem \ref{thm: PKT is polynomial}}
\label{section: proof of thm 3}

Analogous to the definition of rainbow closure,
define the {\em PC closure} $\mathscr{C}_{_{PC}}(D)$ of an arc-colored digraph $D$
to be a digraph with vertex set $V(\mathscr{C}_{_{PC}}(D))=V(D)$
and arc set
$$
A(\mathscr{C}_{_{PC}}(D))=\{uv:~there~is~a~properly~colored~(u,v)\textrm{-}path~in~D\}.
$$
It is not difficult to see that the following simple (but useful) result holds, see also in \cite{BFZ2017}.

\begin{observation}[Bai. et al. \cite{BFZ2017}]\label{observation: pcp kernel}
An arc-colored digraph $D$ has a PCP-kernel if and only if its PC closure $\mathscr{C}_{_{PC}}(D)$ has a kernel.
\end{observation}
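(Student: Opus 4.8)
The plan is to prove the stronger pointwise statement that a vertex set $S\subseteq V(D)$ is a PCP-kernel of $D$ if and only if the very same set $S$ is a kernel of $\mathscr{C}_{_{PC}}(D)$; the observation then follows immediately by quantifying over $S$. The entire argument rests on unwinding the defining property of the PC closure, namely that $uv\in A(\mathscr{C}_{_{PC}}(D))$ holds precisely when $D$ contains a properly colored $(u,v)$-path. Thus the arcs of $\mathscr{C}_{_{PC}}(D)$ are, by construction, exactly a record of which ordered pairs of vertices are joined by a properly colored path in $D$, and it remains only to check that the two defining conditions of the respective kernel notions match under this correspondence.

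For the independence condition I would argue as follows. Condition (i) for a kernel of $\mathscr{C}_{_{PC}}(D)$ asserts that no two vertices of $S$ are joined by an arc of the closure, i.e. for distinct $u,v\in S$ neither $uv$ nor $vu$ lies in $A(\mathscr{C}_{_{PC}}(D))$. By the defining property of the closure this is equivalent to $D$ having neither a properly colored $(u,v)$-path nor a properly colored $(v,u)$-path, which is exactly condition (i) for a PCP-kernel of $D$. For the absorption condition, condition (ii) for a kernel of $\mathscr{C}_{_{PC}}(D)$ says every $w\in V(D)\backslash S$ has an out-arc into $S$ in the closure; again by the definition of the closure this holds if and only if $w$ reaches some vertex of $S$ along a properly colored path in $D$, which is precisely condition (ii) for a PCP-kernel. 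Since both equivalences hold simultaneously, $S$ satisfies the two PCP-kernel conditions in $D$ exactly when it satisfies the two kernel conditions in $\mathscr{C}_{_{PC}}(D)$, and the equivalence is established.

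There is no substantive obstacle here, as the result is essentially a definitional translation; the only point requiring a word of care is the direction convention. I would make explicit that ``connected by a properly colored path'' in the PCP-kernel definition is to be read symmetrically, i.e. as the existence of such a path in \emph{either} direction, so that it lines up with ``connected by an arc'' in the kernel definition (where an edge between $u$ and $v$ means $uv$ or $vu$ is present). A minor formal remark worth including is that loops play no role: both conditions quantify only over pairs of \emph{distinct} vertices, so whether a trivial length-zero $(u,u)$-path is deemed to induce a loop in $\mathscr{C}_{_{PC}}(D)$ does not affect the argument.
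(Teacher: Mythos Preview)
Your proposal is correct and is exactly the natural definitional unwinding one would expect; the paper itself does not supply a proof of this observation but simply records it as ``not difficult to see'' with a reference to \cite{BFZ2017}. Your remark about reading ``connected by a properly colored path'' symmetrically is the only point needing care, and you handle it appropriately.
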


Let $T$ be an arbitrary $m$-colored $n$-vertex tournament
with vertex set $V(T)=\{v_{1},\ldots,v_{n}\}$
and color set $C(T)=\{c_{1},\ldots,c_{m}\}$.
Note that each possible PCP-kernel of $T$ consists of one vertex.
By Observation \ref{observation: pcp kernel},
if we get the closure $\mathscr{C}_{_{PC}}(T)$ of $T$
then we only need to check whether or not there exists a vertex with indegree $n-1$,
which can be done in $O(n)$ time.
It therefore suffices to show that we can construct the closure $\mathscr{C}_{_{PC}}(T)$ in polynomial time.
We first construct a digraph $D_{_{T}}$ with respect to $T$ as follows.
Let
$$
V(D_{_{T}})=\{(v_{i},c_{j}):1\leq i\leq n, 1\leq j\leq m\}.
$$
Note that $D_{_{T}}$ has $nm$ vertices.
We will construct a sequence $D_{0},D_{1},\ldots,D_{n-1}=D_{_{T}}$ of digraphs
such that, for each $0\leq i\leq n-1$,
it holds that $((v,c),(v',c'))\in D_{i}$ if and only if
$T$ has a properly colored $(v,v')$-path of length at most $i$ and
satisfying that the first arc is colored with $c$ and the last arc is colored with $c'$.
The starting point for our construction is an empty digraph $D_{0}$ with vertex set $V(D_{0})=V(D_{_{T}})$.
The main ideas are as follows.

\begin{itemize}
  \item For each arc $v'v''$ with $c(v'v'')=c$ in $T$,
  add an arc from $(v',c)$ to $(v'',c)$ in $D_{0}$,
  denote the resulting digraph by $D_{1}$.
  \item Assume that we have constructed $D_{k-1}$,
  where $k\geq 2$ is an integer.
  For each pair of vertices $(v',c'),(v'',c'')$ with $((v',c'),(v'',c''))\notin D_{k-1}$,
  add an arc from $(v',c')$ to $(v'',c'')$ in $D_{k}$
  if there exists a vertex $(v,c)$ such that
  $((v',c'),(v,c))\in D_{k-1}$, $vv''\in T$, $c(vv'')=c''$ and $c\neq c''$.
\end{itemize}

We continue the above process to get $D_{n-1}$.
Now we construct the closure $\mathscr{C}_{_{PC}}(T)$ of $T$.
First let $V(\mathscr{C}_{_{PC}}(T))=V(T)$.
Then for each pair of vertices $v',v''$ in $\mathscr{C}_{_{PC}}(T)$,
we add an arc from $v'$ to $v''$
whenever there exists an arc from $(v',c')$ to $(v'',c'')$ in $D_{n-1}$ for some $c',c''\in C(T)$.
Here note that if $((v',c'),(v'',c''))\in D_{n-1}$ then there exists a properly colored $(v',v'')$-path in $T$.
One can see that the above process can be terminated in $O(n^{3}m^{3})$ time.

\section{Concluding remarks}
\label{section: concluding}

It is not difficult to see that $RPOG\in NP$.
Combining the NP-hardness of RPOG in the proof of Theorem \ref{thm: RKT is NPC},
we have the following result.

\begin{corollary}
The RPOG problem is NP-complete.
\end{corollary}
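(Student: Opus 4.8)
The plan is to establish the two halves of NP-completeness separately, both of which are essentially already available from the material developed for Theorem~\ref{thm: RKT is NPC}.

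First I would check membership in NP. Given an instance $(D,x,y)$ of RPOG, a rainbow $(x,y)$-path visits each vertex at most once, hence has at most $|V(D)|$ vertices and serves as a polynomial-size certificate. Verification amounts to confirming that the claimed vertex sequence is a directed $(x,y)$-path in $D$ and that its arcs receive pairwise distinct colors, both of which take time polynomial in $|V(D)|+|A(D)|$. Thus $\mathrm{RPOG}\in\mathrm{NP}$.

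Next I would obtain NP-hardness by reusing the polynomial reduction from 3DPM constructed inside the proof of Theorem~\ref{thm: RKT is NPC}. Recall that from a $3n$-vertex $3$-uniform hypergraph $H$ with edge set $\{h_{1},\dots,h_{m}\}$ one builds the arc-colored oriented graph $D_{_H}$ on $\{x_{0},\dots,x_{n}\}$ by inserting, for every $0\le i\le n-1$, the $m$ internally-vertex-disjoint length-$3$ paths $P^{1}_{i+1},\dots,P^{m}_{i+1}$ from $x_{i}$ to $x_{i+1}$, the three arcs of the block-$i$ path $P^{j}_{i}$ being colored by the three vertices of $h_{j}$; one then sets $x=x_{0}$ and $y=x_{n}$. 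As argued there, a rainbow $(x_{0},x_{n})$-path necessarily uses exactly one sub-path $P^{j_{i}}_{i}$ out of each block $i$, and being rainbow is precisely the condition that $h_{j_{1}},\dots,h_{j_{n}}$ be pairwise disjoint, i.e.\ form a perfect matching of $H$; conversely any $3$-dimensional perfect matching yields such a path. Since $D_{_H}$ has $2nm+n+1$ vertices and $3nm$ arcs, it is computable from $H$ in polynomial time, so $\mathrm{3DPM}\le_{p}\mathrm{RPOG}$, and the NP-completeness of $\mathrm{3DPM}$ transfers NP-hardness to RPOG.

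Combining the two items yields that RPOG is NP-complete. I do not expect any real obstacle here: the combinatorial content — correctness of the hypergraph-to-oriented-graph reduction — is already proved in Section~\ref{section: proof of thm 1}, and the only new observation needed is the routine fact that a rainbow path is a polynomially verifiable witness, which gives membership in NP.
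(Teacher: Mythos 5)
Your proposal is correct and matches the paper's argument exactly: membership in NP via the rainbow path as a polynomially verifiable certificate, and NP-hardness by the reduction from 3DPM to RPOG already carried out in the proof of Theorem~\ref{thm: RKT is NPC}. No issues.
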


Similar to the proof of Theorem \ref{thm: rainbow kernels in tournaments},
we can get that an arc-colored tournament with all triangles rainbow has a rainbow kernel.
Here we give the sketch of the proof.

\begin{corollary}\label{corollary: triangles are rainbow}
Every arc-colored tournament with all triangles rainbow has a rainbow kernel.
\end{corollary}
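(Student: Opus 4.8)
The plan is to mimic the inductive proof of Theorem~\ref{thm: rainbow kernels in tournaments} essentially line by line; the only change — and it is a trivial one — occurs at the very last step, where the role played there by Lemma~\ref{lemma: T*} is taken over by a single rainbow triangle. As in Section~\ref{section: proof of thm 2} I will call a vertex \emph{good} if every other vertex can reach it by a rainbow path, and recall that a tournament has a rainbow kernel iff it has a good vertex.

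I would induct on $n=|V(T)|$. The base case $n=3$ is immediate: a transitive triangle has a sink (which is a rainbow kernel), and a cyclic triangle is rainbow, so any vertex works. For $n\geq 4$ I would copy verbatim the opening of the proof of Theorem~\ref{thm: rainbow kernels in tournaments}: for each $v$ the subtournament $T-v$ has a good vertex $v^{*}$ by the induction hypothesis; $v^{*}\rightarrow v$, for otherwise $v^{*}$ is already good in $T$; the assignment $v\mapsto v^{*}$ is injective; hence the arcs $\{v^{*}v:v\in V(T)\}$ induce a disjoint union of cycles, and if there were at least two of them the subtournament induced on one cycle would give a good vertex of $T$. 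Thus $H=(v_{0},v_{1},\ldots,v_{n-1},v_{0})$ is a Hamilton cycle of $T$ and $T$ has no rainbow $(v_{i},v_{i-1})$-path for any $i$ (indices mod $n$). Exactly as there, since every triangle is (rainbow, hence) colored with at least two colors, one gets $v_{i}\rightarrow v_{i+2}$ for all $i$; so, letting $t\geq 3$ be the largest integer with $v_{i}\rightarrow\{v_{i+1},\ldots,v_{i+t-1}\}$ for every $i$ and fixing (after a cyclic relabelling) an index with $v_{0}\rightarrow\{v_{1},\ldots,v_{t-1}\}$ and $v_{t}\rightarrow v_{0}$, where $3\leq t\leq n-1$, everything up to this point is taken straight from Theorem~\ref{thm: rainbow kernels in tournaments}.

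The new step is the finish. Since $v_{0}\rightarrow v_{1}$, since $v_{1}\rightarrow v_{t}$ (as $v_{t}\in\{v_{2},\ldots,v_{t}\}\subseteq N^{+}(v_{1})$ because $t\geq 2$), and since $v_{t}\rightarrow v_{0}$, the three distinct vertices $v_{0},v_{1},v_{t}$ induce a directed $3$-cycle of $T$. By hypothesis this triangle is rainbow, so in particular $c(v_{1}v_{t})\neq c(v_{t}v_{0})$, and therefore $v_{1}\rightarrow v_{t}\rightarrow v_{0}$ is a rainbow $(v_{1},v_{0})$-path. This contradicts the fact that $T$ has no rainbow $(v_{i},v_{i-1})$-path (take $i=1$). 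Hence $T$ must have a good vertex, i.e.\ a rainbow kernel, which closes the induction.

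I do not expect a genuine obstacle here: the entire difficulty of Lemma~\ref{lemma: T*} was to \emph{construct} such a rainbow $(v_{i},v_{i-1})$-path from the much weaker hypothesis that every strongly connected $k$-vertex subtournament uses at least $k-1$ colors, and under ``all triangles rainbow'' it simply drops out of one $3$-cycle. The only things to watch are the bookkeeping already present in the proof of Theorem~\ref{thm: rainbow kernels in tournaments} — that $t$ exists with $3\leq t\leq n-1$, that the relabelled index may be taken to be $0$, and that $v_{0},v_{1},v_{t}$ are pairwise distinct (ensured by $2\leq t\leq n-1$) — none of which requires a new idea.
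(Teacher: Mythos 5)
Your proof is correct and takes essentially the same approach as the paper's own sketch: both reuse the inductive machinery of Theorem~\ref{thm: rainbow kernels in tournaments} to reduce to a Hamilton cycle with no rainbow $(v_{i},v_{i-1})$-path and then contradict this with one rainbow triangle through $v_{0}$ and $v_{t}$. The only (immaterial) difference is the choice of triangle: the paper uses $(v_{0},v_{t-1},v_{t},v_{0})$ to produce the rainbow path $v_{t}\rightarrow v_{0}\rightarrow v_{t-1}$, while you use $(v_{0},v_{1},v_{t},v_{0})$ to produce $v_{1}\rightarrow v_{t}\rightarrow v_{0}$.
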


\begin{proof}[Sketch of proof]
Assume the opposite that the result does not hold and consider a counterexample $T$ with minimum order $n$.
Similar to the proof in Theorem \ref{thm: rainbow kernels in tournaments},
we can get that $T$ has a Hamilton cycle $(v_{0},v_{1},\ldots,v_{n-1},v_{0})$ such that
$T$ has no rainbow $(v_{i},v_{i-1})$-path for any $0\leq i\leq n-1$,
where $V(T)=\{v_{0},v_{1},\ldots,v_{n-1}\}$ and the subtraction is modulo $n$.
Let $t$ be the maximum integer such that $v_{0}\rightarrow \{v_{1},\ldots,v_{t-1}\}$ and $v_{t}\rightarrow v_{0}$.
Such an integer $t$ exists as $v_{n-1}\rightarrow v_{0}$.
Now $(v_{0},v_{t-1},v_{t},v_{0})$ is a triangle and by assumption it is rainbow.
This implies that $T$ has a rainbow $(v_{t},v_{t-1})$-path $v_{t}\rightarrow v_{0}\rightarrow v_{t-1}$, a contradiction.
\end{proof}

Inspired by Observation \ref{observation: rainbow kernel in acyclic digraphs} and Corollary \ref{corollary: triangles are rainbow},
we conjecture that `all cycles rainbow' may be a sufficient condition for the existence of a rainbow kernel in general digraphs,
however, no other strong evidence has been found till now,
so we only state our initial idea as a problem here.

\begin{problem}
Is it true that every arc-colored digraph with all cycles rainbow has a rainbow kernel?
\end{problem}

\end{spacing}
\end{document}